\documentclass[10pt]{amsart}
\usepackage[utf8]{inputenc}
\usepackage{amsfonts}
\usepackage{graphics}
\usepackage{amsxtra,amsmath,amscd,amssymb,amsthm}
\usepackage{mathrsfs}
\usepackage{marginnote}
\usepackage{longtable}
\usepackage{verbatim}
\usepackage{xspace}
\usepackage{ifthen}
\usepackage{tabularx}
\usepackage{setspace,graphicx}
\usepackage{multirow,xcolor}
\usepackage{color, colortbl,enumerate}
\usepackage{tikz}
\usepackage{tikz-cd}
\usepackage{leftidx}
\usepackage[titletoc]{appendix}
\usepackage{cite}
\usepackage{floatrow}
\floatsetup[table]{capposition=bottom}

\setcounter{tocdepth}{1}
\newcommand{\quash}[1]{}  

\theoremstyle{plain}
\numberwithin{equation}{section}

\newtheorem{thm}[equation]{Theorem}
\newtheorem{prop}[equation]{Proposition}
\newtheorem{lm}[equation]{Lemma}
\newtheorem{defi}[equation]{Definition}

\newtheorem{rk}[equation]{Remark}

\newtheorem{ex}[equation]{Example}

\theoremstyle{remark}

\newcommand{\Gortz}{G\"{o}rtz}

\renewcommand{\phi}{\varphi}

\newcommand{\Isom}{{\rm Isom}}

\newcommand{\Hom}{{\rm Hom}}

\newcommand{\Spec}{{\rm Spec}}

\newcommand{\etale}{\' etale }

\newcommand{\Gr}{{\rm Gr}}

\newcommand{\Spin}{{\rm Spin}}
\newcommand{\GL}{{\rm GL}}
\newcommand{\SL}{{\rm SL}}

\newcommand{\SO}{{\rm SO}}
\newcommand{\ord}{{\rm ord}}

\newcommand{\GGL}{{\rm\bf GL}}

\newcommand{\SSO}{{\rm\bf SO}}

\newcommand{\SSpin}{{\rm\bf Spin}}

\newcommand{\RRT}{{\rm\bf RT}}
\newcommand{\AAut}{{\rm\bf Aut}}

\newcommand{\ZZ}{\mathbb{Z}}

\newcommand{\LL}{\mathbb{L}}

\newcommand{\RR}{\mathbb{R}}

\newcommand{\calO}{\mathcal{O}}

\newcommand{\calF}{\mathcal{F}}
\newcommand{\calL}{\mathcal{L}}

\newcommand{\calB}{\mathcal{B}}
\newcommand{\calM}{\mathcal{M}}
\newcommand{\calR}{\mathcal{R}}

\newcommand{\scrG}{\mathscr{G}}

\newcommand{\scrF}{\mathscr{F}}

\newcommand{\bb }{\langle}
\newcommand{\pp}{\rangle}
\newcommand{\llp}{(\!(}
\newcommand{\rlp}{)\!)}
\newcommand{\lp}{[\![}
\newcommand{\rp}{]\!]}

\makeatletter
\makeatother
\newcommand{\mathleft}{\@fleqntrue\@mathmargin0pt}
\newcommand{\mathcenter}{\@fleqnfalse}

\newcommand{\Addresses}{{
		\bigskip
		\footnotesize
		
		\textsc{Morningside Center of Mathematics, Chinese Academy of Sciences,
			Beijing, 100190, China}\par\nopagebreak
		\textit{E-mail address:} \texttt{zhihaozhao@amss.ac.cn}
}}		
\begin{document}
	\bibliographystyle{plain}

	\title{Affine Grassmannians for $G_2$}
	\date{}
	\author{Zhihao Zhao}
	\maketitle

	\begin{abstract}
	We study affine Grassmannians for the exceptional group of type $G_2$. This group can be given as automorphisms of octonion algebras (or para-octonion algebras). By using this automorphism group, we consider all maximal parahoric subgroups in $G_2$, and give a description of affine Grassmannians for $G_2$ as functors classifying suitable orders in a fixed space.
	  
	\end{abstract}
	
	\tableofcontents
	
	\section{Introduction}\label{Intro}
	
	In this paper we give an explicit description of affine Grassmannians for the exceptional group  $G_2$. Generally, affine Grassmannians can be defined by loop groups. Let $k$ be a perfect field, with char$(k)\neq 2$, and let $G$ be a linear algebraic group over $\Spec(k\llp t\rlp)$, where $k\llp t\rlp$ is the Laurent power series with variable $t$. In \cite{BT1}, \cite{BT2}, Bruhat-Tits attach to any connected reductive group $G$ over $\Spec(k\llp t\rlp)$ its building $\calB(G)$, which is a polysimplicial complex equipped with an action of $G(k\llp t\rlp)$. For each point $x\in\calB(G)$, there is a unique smooth affine  group scheme $\scrG_x$ over $\Spec(k\lp t\rp)$, where $k\lp t\rp$ is the formal power series with variable $t$. We call it the parahoric subgroup associated to the point $x$. The $k\lp t\rp$-points $\scrG_x(k\lp t\rp)$ is (at least when $G$ is simply connected) the stabilizer  of $x$ in $G(k\llp t\rlp)$. Its generic fiber is equal to $G$. We define the algebraic loop group $LG$ associated to $G$, which is the ind-scheme representing the functor:
	\[
	R\mapsto LG(R)=G(R\llp t\rlp),
	\]
	for any $k$-algebra $R$. Since $R\llp t\rlp$ is a $k\llp t\rlp$-algebra, this definition makes sense. We also define the positive loop group $L^+\scrG_x$ associated to $\scrG_x$, representing the functor:
	\[
	R\mapsto L^+\scrG_x(R)=\scrG_x(R\lp t\rp).
	\]
	for any $k$-algebra $R$. Thus, $L^+\scrG_x\subset LG$ is a subgroup functor, and the fpqc-quotient 
	\[
	\Gr_{\scrG_x}:=LG/L^+\scrG_x
	\]
	is by definition the affine Grassmannian associated to $\scrG_x$. This definition is given by Pappas-Rapoport in \cite{PR3}, where they develop a theory of twisted loop groups and of their associated flag varieties. We refer \cite{BL}, \cite{BD} and \cite{PR3} for important results in the general theory of affine Grassmannians.
		
	We now take $G$ to be the exceptional group $G_2$ and study its corresponding affine Grassmannians. Our goal (see Theorem \ref{thm 11}, \ref{thm 12}) is to give an explicit description of these affine Grassmannians in terms of lattices with certain extra conditions. A sketch of the history of this problem is the following: Lusztig in \cite{Lu} first shows that affine Grassmannians for simple Lie algebras can be described in terms of lattices that are closed under the Lie bracket. Our goal here is more in line of which is known for classical groups. The prototype is the general linear group $\GGL_n$. It is well known that affine Grassmannians for $\GGL_n$ can be viewed as the space of lattices in a fixed $n$-dimensional vector space. For other classical groups, Pappas-Rapoport give a description of affine Grassmannians and affine flag varieties for unitary groups in \cite{PR3} using lattices (or lattice chains) which are self-dual for a hermitian form; \Gortz~ considers the symplectic group case in \cite{Go2}; Smithling works out the case of split orthogonal groups in \cite{Sm2}.
	
	 In \cite{Zhao2}, the author describes affine Grassmannians for triality groups, which are groups of type $^3D_4$. It turns out that our results here are similar to \cite{Zhao2}, and different from other classical group cases. One reason is that the exceptional group $G_2$ is constructed by octonion algebras, instead of usual vector spaces with quadratic forms. Since triality groups can be constructed by certain twisted composition algebras, its structure is similar to the exceptional group $G_2$. 
	 An explicit description of the building for triality groups does not seem to be known, so in \cite{Zhao2}, we only described affine Grassmannians for the ``standard parahoric subgroup" (the parahoric subgroup which is the stabilizer of the standard lattice). In this paper, we can describe affine Grassmannians for all maximal parahoric subgroups in $G_2$, since the building of $G_2$ is already described by Gan-Yu in \cite{GY1}.
	
	 To explain our results, we need to introduce some notation. Let $C$ be an $8$-dim algebra equipped with a nonsingular quadratic form $q$, satisfying $q(x\cdot y)=q(x)q(y)$. We call $C$ an octonion algebra. Generally, algebras satisfying $q(x\cdot y)=q(x)q(y)$ are called composition algebras. An octonion algebra $C$ is a composition algebra with highest dimension by the classification theorem (see Theorem \ref{thm 25} below). Springer in \cite[\S 2.3]{Sp} shows that the automorphism group $\AAut(C)$ is of type $G_2$. We will see that there is a natural embedding of $\AAut(C)$ to the orthogonal group $\SSO(C)$, since the multiplication of $C$ determines the quadratic form $q$. In \cite{GY1}, Gan-Yu show that this embedding gives rise to a canonical embedding of buildings $\calB(\AAut(C))\rightarrow \calB(\SSO(C))$.
	 
	\quash{
	The exceptional group $G_2$ can be viewed as an automorphism group of (split) octonion algebras. These algebras are neither commutative nor associative.
	}	
	Another ingredient we need is a parahoric subgroup $\scrG_x$ associated to $x\in \calB(G_2)$. In \cite{GY1}, Gan-Yu study the building of $G_2$, and give a nice classification of all its vertices (see \cite[Theorem 9.5]{GY1}). There are three types of maximal parahoric subgroups corresponding to three types of vertices in $\calB(G_2)$ (see Theorem \ref{thm 35} below). The first type consists of the parahoric hyperspecial subgroups associated to hyperspecial points. By fixing this parahoric subgroup $\scrG=\scrG_x$, with the generic fiber $\scrG_\eta=G=G_2$, our first main theorem is the following:
	 
\begin{thm}\label{thm 11}
	    There is an $LG$-equivariant isomorphism
		\[
		\Gr_{\scrG}\simeq \mathscr{F},
		\]	
		where the functor $\mathscr{F}$ sends a $k$-algebra $R$ to the set of finitely generated projective $R\lp t\rp $-modules $L$ (i.e., $R\lp t\rp $-lattices) of $C_s\otimes_{k\llp t\rlp} R\llp t\rlp$ such that
		\begin{itemize}
			\item [(1)] $L$ is self dual under the bilinear form $\bb\ ,\ \pp$, i.e., the form induces an isomorphism $L\simeq \Hom_{R\lp t\rp}(L, R\lp t\rp )$.
			\item [(2)] $L$ is an order in $C_s\otimes_{k\llp t\rlp} R\llp t\rlp$, i.e.,  $e\in L$, $L\star L\subset L$.
	    \end{itemize}
\end{thm}

Here $C_s$ is the split para-octonion algebra over $k\llp t\rlp$ obtained from $C$, and $\star$ is the multiplication in $C_s$. The element $e\in C_s$ is the para-unit. We refer \S \ref{Octonion} for more details. This theorem is shown in \S \ref{Definition}. 	

For the other two types of maximal parahoric subgroups, we have similar results. By fixing parahoric subgroups $\scrG_2$ (resp. $\scrG_3$) in $G_2$ (see Definition \ref{def 63}), our second main theorem is the following:
\quash{
If $F=k\llp t\rlp$ is a local field, after passing to a sufficiently big unramified extension $F'=k'\llp t\rlp$ of $F$, we consider the split para-octonion algebra over $F'$, still denoted by $C_s$.
} 
\begin{thm}\label{thm 12}
	$(a)$. There is an $LG$-equivariant isomorphism 
	\[
	\Gr_{\scrG_2}\simeq \calF_2
	\] 
	where the functor $\calF_2$ sends a $k$-algebra $R$ to the set of $R\lp t\rp$-lattices $L$ of $C_s \otimes_{k\llp t\rlp} R\llp t\rlp$ such that 
	\begin{itemize}
			\item [(1)] $L$ is an order in $C_s \otimes_{k\llp t\rlp} R\llp t\rlp$, i.e., $e\in L$, $L\star L\subset L$.
			\item [(2)] $L\subsetneq L^{\vee}\subsetneq t^{-1}L$.
			\item [(3)] $L^{\vee}\star L^{\vee}\subset t^{-1}L$. 
	    \end{itemize}
$(b)$. There is an $LG$-equivariant isomorphism 
	\[
	\Gr_{\scrG_3}\simeq \calF_3
	\] 
	where the functor $\calF_3$ sends a $k$-algebra $R$ to the set of $R\lp t\rp$-lattices $L$ of $C_s \otimes_{k\llp t\rlp} R\llp t\rlp$ such that 
	\begin{itemize}
			\item [(1)] $L$ is an order in $C_s \otimes_{k\llp t\rlp} R\llp t\rlp$.
			\item [(2)] $L\subsetneq L^{\vee}\subsetneq t^{-1}L$.
			\item [(3)] $M:=tL^{\vee}\star L^{\vee}+L$ is self-dual. 
	    \end{itemize}
 \end{thm}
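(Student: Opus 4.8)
The plan is to mirror the strategy that must have been used for Theorem \ref{thm 11} (the hyperspecial case), adapting it to the non-hyperspecial parahorics $\scrG_2$ and $\scrG_3$. The core idea is to use the embedding $\AAut(C)\hookrightarrow\SSO(C)$ together with the compatible embedding of buildings $\calB(\AAut(C))\rightarrow\calB(\SSO(C))$ from Gan-Yu. Under this embedding, the affine Grassmannian $\Gr_{\scrG_i}$ for $G_2$ should sit inside the affine Grassmannian for $\SSO(C)$, whose $R$-points are already understood in terms of lattices satisfying appropriate (near-)self-duality conditions with respect to the quadratic form $q$ and its associated bilinear form $\bb\ ,\ \pp$. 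The passage from the orthogonal group to $G_2$ is effected by imposing the extra \emph{order} condition: the lattice must be closed under the octonion (para-octonion) multiplication $\star$ and contain the para-unit $e$. Concretely, I would first identify, via the Gan-Yu classification of vertices (\cite[Theorem 9.5]{GY1}), exactly which vertex of $\calB(\SSO(C))$ each $G_2$-vertex maps to, and read off the corresponding lattice-theoretic self-duality condition; this explains conditions (2) in both parts, namely the chain $L\subsetneq L^\vee\subsetneq t^{-1}L$, which is precisely the defining datum for a non-hyperspecial (periodic lattice-chain type) parahoric in the orthogonal setting.

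Next, I would set up the map $\Gr_{\scrG_i}\to\calF_i$ functorially. Given an $R$-point of $\Gr_{\scrG_i}$, i.e. an element $g\in LG(R)=G(R\llp t\rlp)$ modulo $L^+\scrG_i(R)$, I would send it to the lattice $L=g\cdot L_0$, where $L_0$ is the standard lattice defining $\scrG_i$. Because $g$ lies in $G_2=\AAut(C_s)$, it preserves both the multiplication $\star$ and the form, so $L=g L_0$ inherits the order property and the self-duality structure of $L_0$; the key point is to verify that the translated lattice satisfies precisely conditions (1)--(3). The well-definedness (independence of the coset representative) follows because $L^+\scrG_i$ is by definition the $R\lp t\rp$-stabilizer of $L_0$, and the $LG$-equivariance is then immediate from the construction. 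The nontrivial direction is showing this map is a bijection on $R$-points (for every $k$-algebra $R$), i.e. that every lattice $L$ satisfying (1)--(3) arises from a unique coset. Injectivity reduces to the statement that the stabilizer of $L_0$ is exactly $L^+\scrG_i$, which should follow from the fact that $\scrG_i$ is the Bruhat-Tits parahoric attached to the relevant vertex together with the identification of $L_0$'s stabilizer scheme.

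For surjectivity, the strategy is to show that the group $G_2(R\llp t\rlp)$ acts transitively (fppf-locally on $\Spec R$) on the set of lattices satisfying (1)--(3). I would argue this in two steps: first, the orthogonal similitude (or orthogonal) group acts transitively on lattices satisfying the self-duality chain condition (2) — this is the known orthogonal-group result of Smithling \cite{Sm2} applied to the appropriate vertex type — reducing any such $L$ to the standard $\SO$-lattice; second, I would show that the additional conditions (the order condition (1) and the multiplicative condition (3), which encodes that $M=tL^\vee\star L^\vee+L$ is self-dual) cut out exactly the $G_2$-orbit inside this $\SO$-orbit. This last comparison is where the representation-theoretic content of the $G_2\hookrightarrow\SO_7$ (or $\SO_8$) embedding enters: one uses that the $G_2$-stabilizer of a point in the building is the intersection of $\AAut(C_s)$ with the $\SO$-parahoric, which is guaranteed to have the correct reductive quotient by Gan-Yu's analysis of the embedding of buildings. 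The construction $M=tL^\vee\star L^\vee+L$ in part (b) is the device that recovers a self-dual (hyperspecial-type) lattice out of the non-self-dual datum $L$, thereby pinning down which $G_2$-vertex of the third type one is at.

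The main obstacle I expect is the surjectivity/orbit-matching step for $\scrG_3$, specifically verifying that condition (3b) — self-duality of $M=tL^\vee\star L^\vee+L$ — together with the order condition correctly singles out the $G_2$-orbit rather than a larger $\SO$-orbit or a union of several $G_2$-orbits. The subtlety is that the multiplication $\star$ does not interact with the lattice filtration in an obviously linear way, so checking that $tL^\vee\star L^\vee$ lands in the expected place, and that $M$ is genuinely self-dual and stable under $\star$, will require a careful local computation in an explicit basis of the split para-octonion algebra $C_s$ adapted to the root datum of $G_2$. I would handle this by choosing a Chevalley-type basis compatible with the grading induced by the cocharacter defining the vertex, reducing the verification to a finite check on structure constants; the analogous but simpler computation for $\scrG_2$ (where condition (3) is the cleaner containment $L^\vee\star L^\vee\subset t^{-1}L$) serves as a warm-up and a consistency check.
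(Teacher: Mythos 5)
Your overall architecture matches the paper's: reduce to the two statements (stabilizer of the standard lattice equals $L^+\scrG_i$, and \'etale-local transitivity on $\calF_i(R)$), work over a strictly henselian local $R$, put the quadratic lattice in a normal form, and then reconstruct the octonion multiplication on that basis. The paper, however, does not route through Smithling's description of the orthogonal affine Grassmannian; it instead invokes the normal-form lemma for lattices with $L\subsetneq L^\vee\subsetneq t^{-1}L$ from the appendix of Rapoport--Zink, plus a discriminant argument ($dC_s=1$) to exclude the quasi-split normal forms. The multiplication is then pinned down exactly as in the hyperspecial case: split $e=e_1+e_2$ into a hyperbolic pair, set $L_0=\{x: \bb x,e_1\pp=\bb x,e_2\pp=0\}$, $L_i=L_0\star e_i$, and compute the full multiplication table.

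The genuine gap in your plan is that condition (2), the chain $L\subsetneq L^\vee\subsetneq t^{-1}L$, does \emph{not} specify a single orthogonal orbit: the invariant $l=\mathrm{lg}(L^\vee/L)$ can a priori be $2$, $4$, or $6$ (after the quasi-split forms are excluded), and these give distinct $\SSO_8$-orbits. Since $\calF_2$ and $\calF_3$ impose the \emph{same} chain condition and differ only in the multiplicative conditions (3), the heart of the proof is a case analysis on $l$ that your proposal never isolates: one must show (a) an order with $l=2$ cannot exist --- the paper derives $u_2\star u_3=\mu v_1$ with $\mu=t^{-1}\notin R\lp t\rp$, contradicting $L\star L\subset L$; (b) $l=4$ forces the multiplication table of $\LL_2$, hence condition (3) of part (a) and membership in $\calF_2$; (c) $l=6$ forces the table of $\LL_3$ and membership in $\calF_3$ (and one checks $\calF_2(R)\cap\calF_3(R)=\emptyset$ so the two cases do not overlap). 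Your ``finite check on structure constants'' gestures at (b) and (c) but misses that the check must also \emph{determine} which $l$ is compatible with the order condition at all, and must eliminate $l=2$; without that, transitivity of $LG$ on $\calF_2(R)$ and on $\calF_3(R)$ separately does not follow. Relatedly, your first step (``the orthogonal group acts transitively on lattices satisfying (2)'') is false as stated for the same reason, and the exclusion of the quasi-split normal forms (odd $l$) also needs an argument --- the paper's discriminant comparison --- which your outline omits.
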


Here $L^\vee=\{x\in C_s \otimes_{k\llp t\rlp} R\llp t\rlp\mid \bb x,L\pp\subset R\lp t\rp\}$ is the dual lattice of $L$.  Our second main theorem is shown in \S \ref{AffGrass2}. 

The proof of Theorem \ref{thm 11} is similar to the proof in \cite{Zhao2}. That is, by decomposing a lattice $L\in \calF$ satisfying certain conditions as above, we can find a basis of $L$ such that the multiplication table of this basis is the same as the multiplication table of the ``standard lattice" $\LL$. This heavily rely on the fact that $L$ is self dual. In Theorem \ref{thm 12}, when $L\in \calF_2$ or $\calF_3$ is not self dual, we need to find a ``standard basis" by using the classification of quadratic forms in \cite[Appendix]{RZ}.

Octonions algebras play an important role in the study of other exceptional groups. For instance, we can construct Albert algebras by regarding their elements as $3\times 3$ matrices $(x_{ij})$ with $x_{ij}$ in the field $k$ or in $C$. The automorphism group of an Albert algebra is of type $F_4$, and the group of transformations that leave the cubic form $\det$ invariant is of type $E_6$ (see \cite[Ch. 7]{Sp}). We hope to use the work  here to obtain explicit descriptions of affine Grassmannians for other exceptional groups.
	   
The organization of the paper is as follows: In \S \ref{Octonion},  we review definitions and basic propositions of composition algebras. We are particular interested in composition algebras of dimension 8, which are octonion algebras and para-octonion algebras. We give a construction of octonion algebras, and consider their isotropic subspaces. In \S \ref{Building}, we recall the building theory of group $G_2$. We first show that there is an action of $S_3$ on the spin group of the (split) para-octonion algebra, and the subgroup of this spin group, which is fixed under $S_3$, is the automorphism group $G$ of type $G_2$ (see Proposition \ref{prop 32}). By using this relation, we study the building of $G$ and classify vertices in $\calB(G)$. These vertices corresponds to the maximal parahoric subgroups. We refer \cite{GY1} for more details. In \S \ref{AffGrass1}, we construct affine Grassmanninas for $\scrG$, and give our first main theorem. The proof of Theorem \ref{thm 11} is in \S \ref{ProofofTheorem}. In the last section, we fix the other two maximal parahoric subgroups $\scrG_2$ (resp. $\scrG_3$) and construct affine Grassmannians $\Gr_{\scrG_2}$ (resp. $\Gr_{\scrG_3}$). By using the classification of quadratic forms over local fields (see \cite{Ge}), we prove Theorem \ref{thm 12}.

{\bf Acknowledgement}. I thank G. Pappas for several useful
comments and corrections that greatly improved this paper.


\section{Octonion Algebras and Para-Octonion Algebras}\label{Octonion}	

In this section, we will introduce unital composition algebras and symmetric composition algebras. These algebras are often not commutative or associative. We refer \cite[Ch. VIII]{KMRT}, \cite[Ch. 1]{Sp} for general facts and more details. The octonion algebra (resp. para-Octonion algebra) is a unital composition algebra (resp. symmetric composition algebra) with dimension 8. We will see that they have a close connection with each other.
 
 Let $F$ be a field, and suppose char($F)\neq 2$. In this and the following sections, by an $F$-algebra $A$ we mean (unless further specified) a finite dimensional vector space over $F$ equipped with an $F$-bilinear multiplication. The multiplication here is not necessarily associative, and we do not assume that $A$ has an identity element. Consider an algebra $A$ over $F$ with a quadratic form $q: A\rightarrow F$. We always assume that $q$ is nonsingular, i.e., the bilinear form $\bb \ ,\ \pp$ associated to $q$:
 \[
 \bb x,y\pp:=q(x+y)-q(x)-q(y),
 \]
 for all $x,y\in A$ has radical $\{0\}$.
 
\begin{defi}\label{def 21}
A composition algebra $A$ over a field $F$ with multiplication $x\cdot y$ is an algebra with a nonsingular quadratic form $q$ on $A$ satisfying:
\[
q(x\cdot y)=q(x)q(y).
\]
Furthermore, if $A$ has an identity element $e$, we call $A$ a unital composition algebra.
\end{defi}
	
Every element of a unital composition algebras satisfies a quadratic polynomial. This is the minimal polynomial if the element is not a scalar multiple of the identity. More precisely, we have:

\begin{prop}\label{prop 22}
Every element $x$ of a unital composition algebra $A$ satisfies 
\[
x\cdot x-\bb x,e\pp x+q(x)e=0.
\]
For all $x,y\in A$. By linearizing the above equation, we get:
\[
x\cdot y+y\cdot x-\bb x,e\pp y-\bb y,e\pp x+\bb x,y\pp e=0.
\]
\end{prop}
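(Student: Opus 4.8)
The plan is to convert the quadratic composition law into bilinear identities by polarization, and then to read off the desired degree-two relation from nondegeneracy of $\bb\ ,\ \pp$.

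First I would record the normalization $q(e)=1$. Polarizing $q(x\cdot y)=q(x)q(y)$ in the first variable gives $\bb x\cdot y,z\cdot y\pp=\bb x,z\pp q(y)$, and polarizing in the second variable gives $\bb x\cdot y,x\cdot z\pp=q(x)\bb y,z\pp$. Setting $x=e$ in the latter and using the unit axiom $e\cdot y=y$ yields $\bb y,z\pp=q(e)\bb y,z\pp$; since $q$ is nonsingular there exist $y,z$ with $\bb y,z\pp\neq 0$, forcing $q(e)=1$ (hence $\bb e,e\pp=2q(e)=2$). From $\bb x\cdot y,x\cdot z\pp=q(x)\bb y,z\pp$ with $y=e$ and $x\cdot e=x$ I then obtain the key linear identity
\[
\bb x,x\cdot z\pp=q(x)\bb e,z\pp.
\]

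Next I would polarize $\bb x\cdot y,z\cdot y\pp=\bb x,z\pp q(y)$ once more, in the variable $y$, to reach the symmetric four-variable form
\[
\bb x\cdot y,z\cdot w\pp+\bb x\cdot w,z\cdot y\pp=\bb x,z\pp\bb y,w\pp.
\]
Specializing $y=x$ and $z=e$, and using $e\cdot w=w$, $e\cdot x=x$, gives
\[
\bb x\cdot x,w\pp+\bb x\cdot w,x\pp=\bb x,e\pp\bb x,w\pp.
\]
Substituting $\bb x\cdot w,x\pp=q(x)\bb e,w\pp$ (the key identity with $z=w$, together with symmetry of the form) and rearranging produces, for every $w\in A$,
\[
\bb x\cdot x-\bb x,e\pp x+q(x)e,\ w\pp=0.
\]
Since $\bb\ ,\ \pp$ has radical $\{0\}$, the element in the left slot vanishes, which is the first asserted equation.

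Finally, for the linearization I would replace $x$ by $x+y$ in $x\cdot x-\bb x,e\pp x+q(x)e=0$, expand using bilinearity of the multiplication and of $\bb\ ,\ \pp$ together with $q(x+y)=q(x)+q(y)+\bb x,y\pp$, and subtract the instances of the identity at $x$ and at $y$; what remains is exactly
\[
x\cdot y+y\cdot x-\bb x,e\pp y-\bb y,e\pp x+\bb x,y\pp e=0.
\]
The only real subtlety is the bookkeeping of the successive polarizations and the correct use of the unit axioms $e\cdot x=x\cdot e=x$ at each specialization; there is no deep obstacle, since the conceptual crux is simply the standard device of turning the nonlinear composition law into bilinear identities so that nondegeneracy can finish the argument.
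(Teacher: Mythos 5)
Your proof is correct. The paper gives no argument of its own for this proposition---it simply cites Springer \cite{Sp}---and your polarization argument is essentially the standard proof found there: you first derive the bilinear identities $\bb x\cdot y, x\cdot z\pp = q(x)\bb y,z\pp$ and $\bb x\cdot y, z\cdot w\pp + \bb x\cdot w, z\cdot y\pp = \bb x,z\pp\bb y,w\pp$ (which are exactly the content of Lemma \ref{lm 23}, stated just after this proposition in the paper), then specialize at the unit, use nondegeneracy of the form to conclude the quadratic relation, and finish with the routine linearization in $x$.
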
  	

\begin{proof}
See \cite[Proposition 1.2.3]{Sp}.
\end{proof}
	
We now introduce conjugation in $A$. Consider the mapping
$~\bar{}: A\rightarrow A$ given by
\[
\bar{x}=\bb x,e\pp e-x,
\]		
for all $x\in A$. We call $\bar{x}$ the conjugate of $x$. It is easy to see that $~\bar{}~$ is a linear map with $\overline{x\cdot y}=\bar{y}\cdot\bar{x}$, and $\bar{\bar{x}}=x$ for all $x,y\in A$. So $~\bar{}~$ is an involution on $A$.

The following lemmas hold in every unital composition algebras and can be directly deduced by linearizing the quadratic form $q$ and using composition condition (see \cite[\S 1.2, 1.3]{Sp} for he proof). We will use them later:

\begin{lm}\label{lm 23}
We have
\begin{itemize}
	\item [(1)] $\bb  x\cdot z, y\cdot z\pp =\bb x,y\pp q(z),\quad \bb  z\cdot x, z\cdot y\pp =q(z)\bb  x,y\pp$,
	\item [(2)] $\bb  x\cdot z,y\cdot w\pp +\bb  x\cdot w, y\cdot z\pp =\bb  x,y\pp \bb  z,w\pp $,
\end{itemize}
for all $x,y,z\in A$.
\end{lm}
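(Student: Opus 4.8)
The plan is to derive both identities purely by polarizing the composition law $q(x\cdot y)=q(x)q(y)$, together with bilinearity of the multiplication and the defining relation $\bb x,y\pp=q(x+y)-q(x)-q(y)$; no appeal to the unit element or to Proposition \ref{prop 22} is needed.

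First I would establish the first identity in (1) by applying the composition law with $x+y$ in place of $x$, writing $q\big((x+y)\cdot z\big)=q(x+y)\,q(z)$. On the left, distributivity gives $(x+y)\cdot z=x\cdot z+y\cdot z$, so expanding the quadratic form yields
\[
q\big((x+y)\cdot z\big)=q(x\cdot z)+q(y\cdot z)+\bb x\cdot z,\,y\cdot z\pp.
\]
On the right, $q(x+y)=q(x)+q(y)+\bb x,y\pp$ gives $\big(q(x)+q(y)+\bb x,y\pp\big)q(z)$. Cancelling the equal terms $q(x\cdot z)=q(x)q(z)$ and $q(y\cdot z)=q(y)q(z)$, each itself an instance of the composition law, leaves exactly $\bb x\cdot z,\,y\cdot z\pp=\bb x,y\pp q(z)$. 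The second identity of (1) then follows by the symmetric computation: polarizing the composition law in its \emph{first} slot, i.e.\ replacing $x$ by $x+y$ inside $q(z\cdot x)=q(z)q(x)$ and expanding $z\cdot(x+y)=z\cdot x+z\cdot y$ in the same manner, yields $\bb z\cdot x,\,z\cdot y\pp=q(z)\bb x,y\pp$.

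Finally, I would obtain (2) by polarizing the first identity of (1) once more, now in the variable $z$. Replacing $z$ by $z+w$ there gives $\bb x\cdot(z+w),\,y\cdot(z+w)\pp=\bb x,y\pp\,q(z+w)$. Expanding the left by bilinearity of the form and of the multiplication produces the four cross terms
\[
\bb x\cdot z,y\cdot z\pp+\bb x\cdot z,y\cdot w\pp+\bb x\cdot w,y\cdot z\pp+\bb x\cdot w,y\cdot w\pp,
\]
while the right becomes $\bb x,y\pp\big(q(z)+q(w)+\bb z,w\pp\big)$. Using the already-proved identity (1) to cancel $\bb x\cdot z,y\cdot z\pp=\bb x,y\pp q(z)$ and $\bb x\cdot w,y\cdot w\pp=\bb x,y\pp q(w)$ on both sides leaves precisely $\bb x\cdot z,y\cdot w\pp+\bb x\cdot w,y\cdot z\pp=\bb x,y\pp\bb z,w\pp$.

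I do not expect any genuine obstacle: the whole argument is repeated polarization of a single relation, and the only point requiring care is the bookkeeping of cross terms. In particular, since the bilinear form is normalized here without a factor of $\tfrac12$, no division occurs, so these two identities hold without invoking char$(F)\neq 2$.
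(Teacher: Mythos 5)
Your proof is correct and is exactly the argument the paper has in mind: the paper states that these identities ``can be directly deduced by linearizing the quadratic form $q$ and using the composition condition'' and refers to Springer for details, and your repeated polarization of $q(x\cdot y)=q(x)q(y)$ is precisely that linearization. The bookkeeping of cross terms checks out, and your observation that no division by $2$ is needed with this normalization of $\bb\ ,\ \pp$ is accurate.
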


\begin{lm}\label{lm 24}
We have
\begin{itemize}
	\item [(1)] $x\cdot (\bar{x}\cdot y)=q(x)y, \quad (x\cdot \bar{y})\cdot y=q(y)x$,
	\item [(2)] $x\cdot(\bar{y}\cdot z)+y\cdot(\bar{x}\cdot z)=\bb  x,y\pp z$,
	\item [(3)] $(x\cdot\bar{y})\cdot z+(x\cdot \bar{z})\cdot y=\bb  y,z\pp x$,
\end{itemize}
for all $x,y,z\in A$.
\end{lm}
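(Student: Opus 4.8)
The plan is to deduce all three identities from the single identity (1), which in turn rests on an ``adjunction'' relation for the bilinear form. Everything will be extracted from Proposition \ref{prop 22} and Lemma \ref{lm 23}, together with the nonsingularity of $\bb\ ,\ \pp$. Throughout, the non-associativity of $A$ means that factors may never simply be cancelled inside a product; instead each identity will be verified by pairing the two sides against an arbitrary element $w$ and invoking nonsingularity.

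First I would record two preliminary facts. Specializing Lemma \ref{lm 23}(2) at $y=e$ and using $e\cdot z=z$, $e\cdot w=w$ gives
\[
\bb x\cdot z,w\pp+\bb x\cdot w,z\pp=\bb x,e\pp\bb z,w\pp ,
\]
and combining this with the definition $\bar x=\bb x,e\pp e-x$ yields the adjunction relation $\bb x\cdot z,w\pp=\bb z,\bar x\cdot w\pp$, together with its right-handed analogue $\bb z\cdot x,w\pp=\bb z,w\cdot\bar x\pp$ (the latter either by the same specialization or via the anti-automorphism property $\overline{u\cdot v}=\bar v\cdot\bar u$ recorded after the definition of conjugation). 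I would also note $q(e)=1$, which follows from $q(e)=q(e\cdot e)=q(e)^2$ and nonsingularity, and hence $q(\bar x)=q(x)$ by a direct expansion of $q(\bb x,e\pp e-x)$.

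With these in hand, part (1) is quick. For arbitrary $w$ I would compute, using the adjunction relation and then Lemma \ref{lm 23}(1),
\[
\bb x\cdot(\bar x\cdot y),w\pp=\bb \bar x\cdot y,\bar x\cdot w\pp=q(\bar x)\bb y,w\pp=q(x)\bb y,w\pp ,
\]
so that $\bb x\cdot(\bar x\cdot y)-q(x)y,\,w\pp=0$ for every $w$; nonsingularity forces $x\cdot(\bar x\cdot y)=q(x)y$. The companion identity $(x\cdot\bar y)\cdot y=q(y)x$ follows in the same way from the right-handed adjunction relation and the other half of Lemma \ref{lm 23}(1).

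Finally, (2) and (3) are the polarizations of (1). Replacing $x$ by $x+y$ in $x\cdot(\bar x\cdot z)=q(x)z$, expanding by the bilinearity of the product and the linearity of conjugation, and cancelling the diagonal terms $q(x)z$ and $q(y)z$ against $q(x+y)z=(q(x)+q(y)+\bb x,y\pp)z$, leaves precisely $x\cdot(\bar y\cdot z)+y\cdot(\bar x\cdot z)=\bb x,y\pp z$, which is (2); identity (3) is obtained identically by replacing $y$ by $y+z$ in $(x\cdot\bar y)\cdot y=q(y)x$. I expect the only genuinely nontrivial step to be the adjunction relation underlying (1); once it is available the rest is bookkeeping, and the main thing to watch is that no cancellation is ever performed inside a product, only after pairing with $w$ and appealing to nonsingularity.
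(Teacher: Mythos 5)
Your proof is correct. The paper does not prove this lemma itself but defers to Springer with the remark that it follows by linearizing $q$ and using the composition condition; your argument (establishing the adjunction relation $\bb x\cdot z,w\pp=\bb z,\bar x\cdot w\pp$ from Lemma \ref{lm 23}, deducing part (1) by pairing against an arbitrary $w$ and invoking nonsingularity, then polarizing to get (2) and (3)) is precisely that standard argument, carried out carefully.
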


Given a unital composition algebra $A$ with dimension $n$, one can get a new composition algebra with dimension $2n$ by using Cayley-Dickson process (see \cite[\S 33.C]{KMRT}). We now have the well-know classification of unital composition algebras:

\begin{thm}\label{thm 25}
	Every unital composition algebra over $F$ is obtained by the Cayley-Dickson process. The possible dimensions are $1,2,4$ and $8$. Composition algebras of dimension $1$ or $2$ are commutative and associative, those of dimension $4$ are associative but not commutative, and those of dimension $8$ are neither commutative nor associative.
\end{thm}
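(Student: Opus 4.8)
The plan is to prove the theorem by the Cayley–Dickson doubling process, run inductively and governed by the associativity of the algebra being doubled. First I would establish the basic doubling step. Suppose $B\subsetneq A$ is a proper unital composition subalgebra; the base case is $B=F\cdot e$, which is a $1$-dimensional composition subalgebra (note $q(e)=1$ by multiplicativity, so $q|_{F\cdot e}$ is nonsingular). Since $q$ is nonsingular, its restriction to the orthogonal complement $B^\perp$ is again nonsingular, so using char$(F)\neq 2$ I can choose $z\in B^\perp$ with $q(z)=\lambda\neq 0$. I would then show $B\cap Bz=0$, so that $\dim(B+Bz)=2\dim B$, and that the multiplication on $B+Bz$ is governed by the Cayley–Dickson formula
\[
(a+bz)\cdot(c+dz)=(a\cdot c-\lambda\,\bar{d}\cdot b)+(d\cdot a+b\cdot\bar{c})z.
\]
The verification that $B+Bz$ is closed under multiplication and that $q$ stays multiplicative on it is exactly where Lemmas \ref{lm 23} and \ref{lm 24} enter: the linearized composition identities let one expand $q((a+bz)\cdot(c+dz))$ and collect terms into $q(a+bz)\,q(c+dz)$.

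Second, I would iterate this doubling starting from $F\cdot e$. As long as the current subalgebra $B$ is proper in $A$, the step produces a strictly larger composition subalgebra $B+Bz\subseteq A$ of twice the dimension; since $A$ is finite dimensional, the tower terminates with $A$ itself. This simultaneously shows that $\dim A$ is a power of $2$ and that $A$ is obtained by the Cayley–Dickson process.

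Third, and this is the crux of the dimension bound, I would track how commutativity and associativity propagate through the tower and show that the double $B+Bz$ remains a composition algebra \emph{only when $B$ is associative}. Concretely, $B+Bz$ is associative if and only if $B$ is commutative and associative, and it is commutative if and only if the involution on $B$ is trivial, i.e. $B=F\cdot e$. Starting from $F\cdot e$ (commutative and associative) one obtains in turn a $2$-dimensional commutative associative algebra, a $4$-dimensional associative but non-commutative algebra, and an $8$-dimensional algebra that is alternative but no longer associative. At the next step the failure of associativity of the $8$-dimensional algebra makes the multiplicativity $q(u\cdot v)=q(u)q(v)$ break down for the putative $16$-dimensional double, so no composition algebra of dimension $16$ exists. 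This caps the dimensions at $1,2,4,8$ and yields the stated commutativity/associativity assertions in each dimension.

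The main obstacle is the identity-juggling in the doubling step: verifying multiplicativity of $q$ on $B+Bz$ and, especially, pinning down the precise point at which composition fails when $B$ is non-associative. The cleanest route is to derive the associator and commutator identities for the doubled product directly from Lemmas \ref{lm 23} and \ref{lm 24}, together with the involution properties $\overline{x\cdot y}=\bar{y}\cdot\bar{x}$ and $\bar{\bar{x}}=x$ recorded after Proposition \ref{prop 22}. I would organize the computation around these identities so that each doubling reduces to a finite bookkeeping check rather than an ad hoc calculation.
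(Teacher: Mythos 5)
Your proposal is correct, and it is essentially the argument the paper relies on: the paper proves Theorem \ref{thm 25} simply by citing \cite[Theorem 33.17]{KMRT}, and the proof there is exactly your Cayley--Dickson doubling induction (double a proper nondegenerate unital composition subalgebra $B$ along $z\in B^{\perp}$ with $q(z)\neq 0$, verify via the linearized composition identities of Lemmas \ref{lm 23}--\ref{lm 24} that the double carries the Cayley--Dickson product and is a composition algebra if and only if $B$ is associative, then track commutativity/associativity up the tower $F\cdot e\subset 2\subset 4\subset 8$ to cap the dimension at $8$). Since your route coincides with the cited standard proof and the key steps you outline are the right ones, there is nothing to correct.
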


\begin{proof}
	See \cite[Theorem 33.17]{KMRT}.
\end{proof}

We call $A$ a quadratic \etale algebra if $\dim A=2$, a quaternion algebra (resp. an octonion algebra) if $\dim A=4$ (resp. $\dim A=8$). The Cayley-Dickson process applied to an octonion algebra does not yield a composition algebra, so octonion algebras have highest dimension. We will use these algebras to construct the exceptional group $G_2$.

 Recall that an element $x\in A$ is called isotropic if $q(x)=0$. The quadratic form $q$ is called isotropic if there exist nonzero isotropic elements in $A$. From \cite[Theorem 33.19]{KMRT}, there is only one isomorphism class of octonion algebras with isotropic quadratic forms. We call $C_s$ the split octonion algebra if they have isotropic quadratic form. The split octonion algebra $C_s$ can be constructed from the split quaternion algebra, but we have a remarkable way to express its multiplication rule (see \cite[\S 2]{Sp2} and \cite[\S 5]{GY1}): 
 
 Consider a free $\ZZ$-module $W$ of rank 3, and let $W^{\vee}=\Hom_{\ZZ}(W,\ZZ)$ be the dual module of $W$. By fixing a basis in $W$, we set 
 \[
 W=\ZZ u_1+\ZZ u_2+\ZZ u_3,\quad W^\vee=\ZZ v_1+\ZZ v_2+\ZZ v_3,
 \]
 where $v_i(u_j)=\delta_{ij}$ for $i,j\in\{1,2,3\}$. We also fix an isomorphism $\wedge^3 W$ with $\ZZ\simeq \ZZ\cdot (u_1\wedge u_2\wedge u_3)$. Then we get an isomorphism:
 \[
 \wedge^2 W\simeq W^\vee \quad \text{given by $w_1\wedge w_2\mapsto (w_1\wedge w_2, -)$},
 \]
where $(w_1\wedge w_2,w):=w_1\wedge w_2\wedge w$ for any $w\in W$. Similarly, we also have an isomorphism $\wedge^2 W^\vee\simeq W$ by duality. For instance, $u_1\wedge u_2$ is identified with $v_3$, $v_1\wedge v_3$ is identified with $-u_2$, etc.
 
 Let $\Lambda$ be the space of matrices of the form:
 \[
 \left(\begin{array}{cc}
 	a&w\\
 	\phi&b
 \end{array}
 \right),\quad\text{where $a,b\in\ZZ, w\in W$, and $\phi\in W^\vee$}.
 \]
 This is a free $\ZZ$-module of rank 8. If we define addition by matrices addition, and multiplication by:
 \[
 \left(\begin{array}{cc}
 	a_1 &w_1\\
 	\phi_1&b_1
 \end{array}
 \right)\cdot
 \left(\begin{array}{cc}
 	a_2&w_2\\
 	\phi_2&b_2
 \end{array}
 \right)=
 \left(\begin{array}{cc}
 	a_1a_2-\phi_2(w_1) & a_1w_2+b_2w_1+\phi_1\wedge\phi_2\\
 	a_2\phi_1+b_1\phi_2+w_1\wedge w_2 & b_1b_2-\phi_1(w_2)
 \end{array}
 \right)
 \]
 then $\Lambda$ is an algebra over $\ZZ$. Moreover, consider the quadratic form $q$ on $\Lambda$ given by:
 \[
 q:\left(\begin{array}{cc}
 	a &w\\
 	\phi&b
 \end{array}
 \right)\mapsto
 ab+\phi(w).
 \]
 It is easy to see that $q(x\cdot y)=q(x)q(y)$, so that $\Lambda$ is a unital composition algebra  over $\ZZ$. Here $\Lambda$ has a standard basis $e_1, e_2, \{u_i, v_i\}_{i=1,2,3}$, where 
 \[
 e_1=\left(\begin{array}{cc}
 	1&0\\
 	0&0
 \end{array}
 \right),\quad
 e_2=\left(\begin{array}{cc}
 	0&0\\
 	0&1
 \end{array}
 \right),\quad
 u_i=\left(\begin{array}{cc}
 	0&u_i\\
 	0&0
 \end{array}
 \right),\quad
 v_i=\left(\begin{array}{cc}
 	0&0\\
 	v_i&0
 \end{array}
 \right).
 \]
 The identity element in $\Lambda$ is the identity matrix $e=e_1+e_2$. Therefore, the split octonion algebra $C_s$ can be viewed as $C_s=\Lambda\otimes_{\ZZ}F$.
 
 Although unital composition algebras have lots of good properties, most composition algebras do not admit an identity. In the rest of this section, we will discuss a special class of composition algebras, called symmetric composition algebras.
 
\begin{defi}\label{def 26}
A symmetric composition algebra $(S,\star)$ is a composition algebra with multiplication $x\star y$, satisfying
\[
\bb  x\star y,z\pp =\bb y\star z,x\pp,
\]
for all $x,y,z\in S$.
\end{defi}
 
Similar to Lemma \ref{lm 23}, \ref{lm 24}, the following results hold in every symmetric composition algebra $(S,\star)$ (see \cite[Lemma 34.1]{KMRT}):

\begin{lm}\label{lm 27}
We have
\begin{itemize}
	\item [(1)] $\bb  x\star z, y\star z\pp =\bb  x,y\pp q(z)$,
	\item [(2)] $\bb  z\star x, z\star y\pp =q(z)\bb  x,y\pp \pp $,
	\item [(3)] $\bb  x\star z,y\star w\pp +\bb  x\star w, y\star z\pp =\bb  x,y\pp \bb  z,w\pp $,
\end{itemize}
for all $x,y,z\in S$.
\end{lm}

\begin{lm}\label{lm 28}
We have
\begin{itemize}
	\item [(1)] $(x\star y)\star z+(z\star y)\star x=\bb  x,z\pp y$,
	\item [(2)] $x\star(y\star z)+z\star(y\star x)=\bb  x,z\pp y$,
\end{itemize}
for all $x,y,z\in S$. 

In particular, we have
$(x\star y)\star x=x\star(y\star x)=q(x)y$.
\end{lm}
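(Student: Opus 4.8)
The plan is to exploit the nonsingularity of $\bb\ ,\ \pp$ to turn each of the two vector identities into a scalar one. Since the bilinear form has radical $\{0\}$, two elements of $S$ agree as soon as they pair equally against every $w\in S$. Thus to prove (1) it suffices to verify
\[
\bb (x\star y)\star z+(z\star y)\star x,\,w\pp=\bb x,z\pp\,\bb y,w\pp
\]
for all $w\in S$, and likewise to reduce (2) to the scalar identity obtained by pairing its two sides against an arbitrary $w$.

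For (1) I would feed the symmetry axiom $\bb a\star b,c\pp=\bb b\star c,a\pp$ of Definition \ref{def 26} into the left-hand pairing. Applying it once to $\bb (x\star y)\star z,w\pp$ (with $a=x\star y$) rewrites that term as the product pairing $\bb z\star w,\,x\star y\pp$, and one application to $\bb (z\star y)\star x,w\pp$ gives $\bb x\star w,\,z\star y\pp$. Up to the symmetry of the form, the left side is therefore $\bb x\star y,\,z\star w\pp+\bb x\star w,\,z\star y\pp$, which is exactly the shape governed by Lemma \ref{lm 27}(3): taking the four arguments there to be $x,z,y,w$ collapses this sum to $\bb x,z\pp\,\bb y,w\pp$, the right-hand side. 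Hence (1) holds.

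Statement (2) goes the same way, except the symmetry axiom is now used twice on each term to pull the factor $w$ to the front, turning $\bb x\star(y\star z),w\pp$ into $\bb w\star x,\,y\star z\pp$ and $\bb z\star(y\star x),w\pp$ into $\bb w\star z,\,y\star x\pp$; Lemma \ref{lm 27}(3), applied with arguments $w,y,x,z$, again reduces the sum to $\bb x,z\pp\,\bb y,w\pp$, matching $\bb \bb x,z\pp y,\,w\pp$. Finally, the ``in particular'' assertion follows by setting $z=x$ in (1) and in (2): each left side becomes twice the relevant term, while the right side is $\bb x,x\pp y=2q(x)\,y$, using $\bb x,x\pp=q(2x)-2q(x)=2q(x)$. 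Dividing by $2$, which is legitimate since $\mathrm{char}(F)\neq 2$, yields $(x\star y)\star x=x\star(y\star x)=q(x)y$.

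The whole argument is essentially bookkeeping, so I do not expect a genuine obstacle; the only point demanding care is tracking the cyclic symmetry relation accurately enough that both left-hand terms land in the precise index pattern required by Lemma \ref{lm 27}(3). Getting those substitutions aligned is the entire content of the proof.
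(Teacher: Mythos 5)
Your proof is correct: the reduction to a scalar identity via nondegeneracy of $\bb\ ,\ \pp$, the use of the symmetry axiom $\bb a\star b,c\pp=\bb b\star c,a\pp$ (once per term for (1), twice per term for (2)) to bring each summand into the exact shape of Lemma \ref{lm 27}(3), and the final division by $2$ justified by $\mathrm{char}(F)\neq 2$ all check out. The paper offers no argument of its own for this lemma---it defers to [KMRT, Lemma 34.1]---and your derivation is essentially the standard one given there.
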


Note that we have $(x\star y)\star x=x\star(y\star x)=q(x)y$ for all $x,y\in S$ by Lemma \ref{lm 28}. Generally, we can show that an algebra with quadratic form $q$ satisfying this equation is a symmetric composition algebra.
 
It turns out that unital composition algebras have a close relation to symmetric composition algebras. Starting from a unital composition algebra $(A,\cdot)$ over $F$, we can always define a symmetric composition algebra $(A,\star)$ given by:
\[
x\star y=\bar{x}\cdot\bar{y},
\] 
for all $x,y\in A$. Symmetric composition algebras do not have identity, but the identity element $e\in A$ plays a special role in the corresponding symmetric composition algebra $(A,\star)$: it is an idempotent ($e\star e=e$) and satisfies 
\[
e\star x=x\star e=-x, 
\]
for any $x\in A$, with $\bb  x,e\pp =0$. We call an element which satisfies the above condition a para-unit. In particular, symmetric composition algebras corresponding to octonion algebras (resp. split octonion algebras) are called para-octonion algebras (resp. split para-octonion algebras), denoted by $(C,\star)$ (resp. $(C_s,\star)$). Using the same symbols as above ($C_s\simeq \Lambda\otimes_{\ZZ}F$, with basis $e_1,e_2, \{u_i,v_i\}_{i=1,2,3}$), we obtain the multiplication table of the split para-octonion algebra $(C_s,\star)$ as Table \ref{tab 1} below (we write ``$\cdot$" instead of $0$ for clarity).

\begin{table}[H]
				\begin{tabular}{cc|cc|ccc|ccc}
					\multicolumn{9}{c}{y} \\ \hline
					&$\star$&$e_1$&$e_2$&$u_1$&$u_2$&$u_3$&$v_1$&$v_2$&$v_3$\\ \hline
					&$e_1$&$ e_2$
					&$\cdot$&$\cdot$&$\cdot$&$\cdot$
					&$-v_1$&$-v_2$&$-v_3$\\
					&$e_2$&$\cdot$&$e_1$
					&$-u_1$&$-u_2$&$-u_3$
					&$\cdot$&$\cdot$&$\cdot$\\ \hline
					&$u_1$&$-u_1$&$\cdot$
					&$\cdot$&$v_3$&$-v_2$
					&$-e_1$&$\cdot$&$\cdot$\\
					&$u_2$&$-u_2$&$\cdot$
					&$-v_3$&$\cdot$&$v_1$
					&$\cdot$&$-e_1$&$\cdot$\\ 
					$x$&$u_3$&$-u_3$&$\cdot$
					&$v_2$&$-v_1$&$\cdot$
					&$\cdot$&$\cdot$&$-e_1$\\ \hline
					&$v_1$&$\cdot$&$-v_1$
					&$-e_2$&$\cdot$&$\cdot$
					&$\cdot$&$u_3$&$-u_2$\\
					&$v_2$&$\cdot$&$-v_2$
					&$\cdot$&$-e_2$&$\cdot$
					&$-u_3$&$\cdot$&$u_1$\\
					&$v_3$&$\cdot$&$-v_3$
					&$\cdot$&$\cdot$&$-e_2$
					&$u_2$&$-u_1$&$\cdot$\\
				\end{tabular}
				\caption{The split para-octonion algebra multiplication $x\star y$}
				\label{tab 1}
			\end{table}

With respect to this basis, the corresponding bilinear form $\bb\ ,\ \pp$ is given by 
 \[
 \bb e_1,e_2\pp=1,\quad \bb u_i,v_j\pp=\delta_{ij},
 \]
 where all others are equal to $0$. The involution $~\bar{~}$ on $(C_s,\star)$ is given by:
 \[
 \overline{e}_1=e_2,\quad \overline{e}_2=e_1,\quad \overline{u}_i=-u_i,\quad \overline{v}_i=-v_i.
 \] 
 We prefer to use para-octonion algebras $(C,\star)$ instead of octonion algebras $(C,\star)$ in the following sections. One reason is that it is convenient to work with the spin group $\Spin(C,\star)$. We refer Remark \ref{rk 32} for more details.

\begin{rk}\label{rk 29}{\rm
	Recall that a subspace $U$ in an algebra $A$ with a quadratic form $q$ is isotropic if there exists $x\in U$ such that $q(x)=0$, and $U$ is totally isotropic if $q(x)=0$ for all $x\in U$. The maximal totally isotropic subspaces are totally isotropic subspaces with the highest dimension. All maximal totally isotropic subspaces have the same dimension, which is called the Witt index of $q$. The Witt index is at most equal to half dimension of $A$.
	
	The maximal totally isotropic subspaces in the split octonion (para-octonion) algebra $C_s$ over $F$ is described by Blij-Springer in \cite{Sp3}, and Matzri-Vishne translate the classification to arbitrary composition algebras in \cite{MV}. In $C_s$, Every totally isotropic subspace is of the form 
	\[
	x\star C_s,~ \text{or}~ C_s\star x,
	\] 
	where $x$ is an isotropic element. These spaces have dimension $4$. Furthermore, $x\star C_s=y\star C_s$ if and only if $Fx=Fy$ (see \cite[Theorem 3.1, Proposition 3.2]{MV}).
	
	Let $U$ be a totally isotropic subspace in $C_s$. Consider the intersection of maximal isotropic subspaces:
	\[
	\calL(U):=\cap_{x\in U}(C_s\star x),\quad \calR(U):=\cap_{x\in U}(x\star C_s).
	\]
	We have the following diagram:
	\[
	\begin{tikzcd}
& Fx\arrow[dl, "\mathcal{L}"]&\\
C_s\star x \arrow[rr, "\mathcal{R}\circ\mathcal{R}"]& &x\star C_s.\arrow[ul, "\mathcal{L}"]
\end{tikzcd}
	\]
	for any $x$ isotropic element. We call this diagram the geometric triality graph (see\cite[\S 6.2]{MV}).
}\end{rk} 
 
 \section{The Building of $G_2$}\label{Building}
 
 In this section, we briefly recall the theory of Bruhat-Tits building. We are particularly interested in the special case when the group is of type $G_2$. The theory of buildings of reductive groups over local fields are given by Bruhat-Tits in their series of papers \cite{BT1}, \cite{BT2}, \cite{BT3}, \cite{BT4}, \cite{BT5}. In \cite{BT1}, \cite{BT2}, the building of a classical group $G$ is a polysimplicial complex equipped with an action of $G$, and can be given  as a set of graded lattice chains satisfying certain conditions. 
 
 For a simply connected classical group $G$, let $F$ be a discrete valuation field, and $\calO_F$ be the ring of integers with uniformizer $\pi\in\calO_F$. The building $\calB(G)$ can be realized geometrically as a set of graded lattice chains. To each point $x\in \calB(G)$, the subgroup $\scrG_x$ that stabilizes the graded lattice chain $x$ is a smooth, connected, affine group scheme over $\Spec(\calO_F)$, and called the parahoric subgroup of $\calB(G)$ associated to $x$. We will fix a parahoric subgroup of $\calB(G)$ in the next section and use it to define our affine Grassmannians. We refer \cite{BT3}, \cite{BT5} for the theory of buildings of classical groups, and \cite{GY1} for the building of $G_2$.
 
 Let us first construct the exceptional group of type $G_2$. Generally, let $(V,q)$ be a finite dimensional vector space with a nonsingular quadratic form $q$ over a field $F$, with char($F$) different from $2$. Denote by $\bb \ ,\ \pp$ the bilinear form corresponding to $q$. The special orthogonal group $\SO(V,q)$ is the subgroup of the special linear group $\SL(V,q)$ that preserve the form $q$. The universal covering of $\SO(V,q)$ is the spin group $\Spin(V,q)$ defined by:
 \[
 \Spin(V,q)=\{x\in C_0(V,q)^* \mid xVx^{-1}=V,~ \tau(x)x=1\}.
 \]
 Here $C(V,q)$ is the Clifford algebra of $(V,q)$, which is the factor of the tensor algebra $T(V)=\oplus_{n\geq 0}(V\otimes\cdots\otimes V)$ by the ideal $I(q)$ generated by all elements of the form $x\otimes x-q(x)\cdot 1$ for $x\in V$. It is a $\ZZ/2\ZZ$-graded algebra with $C(V,q)=C_0(V,q)\oplus C_1(V,q)$. We call $C_0(V,q)$ the even Clifford algebra, and $C_1(V,q)$ the odd Clifford algebra (see \cite[Ch. IV]{KMRT}). 	There is a  canonical involution of the Clifford algebra $\tau: C(V,q)\rightarrow C(V,q)$ given by $\tau(x_1\cdots x_d)=x_d\cdots x_1$ for $x_1,\cdots, x_d\in V$. Thus, the spin group $\Spin(V,q)$ is a subgroup of $C_0(V,q)$. We have an exact sequence:
 \[
 1\rightarrow \ZZ/2\ZZ\rightarrow \Spin(V,q)\rightarrow \SO(V,q)\rightarrow 1.
 \]
In particular, when consider the split para-octonion algebra $(C_s,\star)$ as an 8-dim vector space over $F$, we can get the corresponding special orthogonal group $\SO(C_s,\star)$ (resp. the spin group $\Spin(C_s,\star)$). The corresponding group scheme $\SSO(C_s,\star)$ (resp. $\SSpin(C_s,\star)$ over $\Spec(F)$ are defined by:
\[
\begin{array}{l}
	\SSO(C_s,\star)(R)=\{g\in \SL(C_{sR},\star)\mid \bb g(x),g(y)\pp=\bb x,y\pp\},\\
	\SSpin(C_s,\star)(R)=\{x\in C_0(C_{sR},\star)^*\mid xC_{sR} x^{-1}=C_{sR}, \tau(x)x=1\},
\end{array}
\]
for any $x,y\in C_{sR}$. Here $R$ is an $F$-algebra, and $C_{sR}:=C_s\otimes_F R$. The following description of $\SSpin(C_s,\star)$ shows that the spin group $\SSpin(C_s,\star)$ is a subgroup of the triple orthogonal group $\SSO_8^{\times 3}$. 
 
 \begin{thm}\label{thm 31}
There is an isomorphism 
\[
\SSpin(C_s,\star)\simeq \RRT(C_s,\star),
\]
where $\RRT(C_s,\star)$ is the related triple group scheme that represents the functor from $F$-algebras to groups that sends R to
\[
\RRT(C_s,\star)(R):=\{(g_1,g_2,g_3)\in \SSO(C_s,\star)(R)^{\times 3} \mid g_i(x\star y)=g_{i+1}(x)\star g_{i+2}(y)\}.
\]
for any $x,y\in C_s\otimes_F R$, $i=1,2,3 \mod 3$.
\end{thm}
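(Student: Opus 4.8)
The plan is to realize both group schemes inside the Clifford algebra of $(C_s,\star)$ and to use the symmetric composition structure to produce the three representations; this is the classical \emph{principle of triality} for $\Spin_8$, and the argument follows \cite[\S 35]{KMRT}.

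First I would build a Clifford representation out of the multiplication $\star$. For $x\in C_s$ write $\ell_x,r_x\in\End(C_s)$ for left and right multiplication, $\ell_x(y)=x\star y$ and $r_x(y)=y\star x$. The identity $(x\star y)\star x=x\star(y\star x)=q(x)y$ of Lemma \ref{lm 28} says precisely that $\ell_xr_x=r_x\ell_x=q(x)\,\mathrm{id}$. Hence the block operator on $C_s\oplus C_s$
\[
\lambda_x=\begin{pmatrix}0&\ell_x\\ r_x&0\end{pmatrix}
\]
satisfies $\lambda_x^2=q(x)\,\mathrm{id}$, so by the universal property of the Clifford algebra the assignment $x\mapsto\lambda_x$ extends to an $F$-algebra homomorphism $C(C_s,\star)\to\End(C_s\oplus C_s)$. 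Since $\dim_F C(C_s,\star)=2^8=16^2=\dim_F\End(C_s\oplus C_s)$ and $C(C_s,\star)$ is central simple, this nonzero map is an isomorphism. Products $\lambda_x\lambda_y$ are block-diagonal, so the even part acts diagonally, and restriction gives an isomorphism $C_0(C_s,\star)\simeq\End(C_s^+)\times\End(C_s^-)$ onto the two half-spin representations $C_s^{\pm}$. As the even Clifford algebra is Azumaya, all of this is natural in the $F$-algebra $R$ and persists after base change.

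Next I would define the homomorphism $\SSpin(C_s,\star)\to\RRT(C_s,\star)$. A point $w\in\SSpin(C_s,\star)(R)$ yields three isometries: the vector representation $g_1:=\chi(w)\in\SSO(C_s,\star)(R)$, determined by $w\lambda_xw^{-1}=\lambda_{g_1(x)}$ (this uses $wC_{sR}w^{-1}=C_{sR}$), and the pair $(g_2,g_3)\in\End(C_s^+)\times\End(C_s^-)=C_0(C_{sR},\star)$ supplied by the decomposition above. Writing $w\lambda_xw^{-1}=\lambda_{g_1(x)}$ as an identity of operators on $C_{sR}\oplus C_{sR}$ and comparing the two off-diagonal blocks translates the conjugation relation into
\[
g_2(x\star y)=g_1(x)\star g_3(y),\qquad g_3(x\star y)=g_2(x)\star g_1(y),
\]
which, after relabeling the three representations, are exactly the relations defining $\RRT(C_s,\star)$ (the third relation follows from these two by bijectivity), while $\tau(w)w=1$ forces each $g_i$ to be an isometry rather than a mere similitude. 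Conversely, a related triple $(g_1,g_2,g_3)$ gives $w:=(g_2,g_3)\in\End(C_s^+)\times\End(C_s^-)=C_0(C_{sR},\star)^{\times}$; the triality relations guarantee $wC_{sR}w^{-1}=C_{sR}$ with induced vector action $g_1$, and the isometry hypothesis forces $\tau(w)w=1$, so $w\in\SSpin(C_s,\star)(R)$. Checking that these two assignments are mutually inverse homomorphisms of group functors then yields the desired isomorphism of group schemes.

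The main obstacle is the bookkeeping in the last paragraph: verifying that conjugation by an even unit preserving $C_{sR}$ is faithfully recorded by the off-diagonal block identities, that these identities are equivalent to the $\RRT$-relations, and---most delicately---that $\tau(w)w=1$ corresponds exactly to the $g_i$ being isometries rather than only constraining their similitude factors by a relation of the form $\mu_1=\mu_2\mu_3$. Ensuring that every identification is natural in $R$, so that one obtains an isomorphism of group schemes and not merely a bijection on field-valued points, is the other delicate point; the underlying algebraic identities all reduce to Lemmas \ref{lm 27} and \ref{lm 28}.
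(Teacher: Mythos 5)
Your proposal is correct in outline and follows the same route as the paper, which for this statement simply cites \cite[Proposition 35.8]{KMRT}: the Clifford representation $\lambda_x$ built from $\ell_x,r_x$ using $(x\star y)\star x=x\star(y\star x)=q(x)y$ from Lemma \ref{lm 28}, the identification $C_0(C_s,\star)\simeq\End(C_s^+)\times\End(C_s^-)$, and the translation of $w\lambda_xw^{-1}=\lambda_{g_1(x)}$ into the two off-diagonal block identities are exactly the ingredients of the KMRT argument. The points you flag as delicate --- that one of the three triality relations follows from the other two, and that $\tau(w)w=1$ cuts the half-spin similitudes down to isometries --- are precisely the content of \cite[\S 35]{KMRT}, so your sketch is a faithful reconstruction of the cited proof.
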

	 
See the proof in \cite[Proposition 35.8]{KMRT}. The triple $(g_1,g_2,g_3)\in \SSO(C_s,\star)(R)^{\times 3}$ that satisfies $g_i(x\star y)=g_{i+1}(x)\star g_{i+2}(y)$ is called a related triple. For any given $g_1\in \SO(C_s,\star)(R)$, there exists $g_2, g_3\in \SO(C_s,\star)(R)$ such that $(g_1,g_2,g_3)$ is a related triple (see \cite[Proposition 35.4]{KMRT}). 

\begin{rk}\label{rk 32}{\rm
	We have a similar result for the spin group of the split octonion algebra $(C_s,\cdot)$: There is an isomorphism 
	\[
	\SSpin(C_s,\cdot)\simeq \{(g_1,g_2,g_3)\in \SSO(C_s,\cdot)(R)^{\times 3} \mid g_i(x\cdot y)=\overline{g_{i+1}(\bar{x})\cdot g_{i+2}(\bar{y})}\}.
	\]
	(See \cite[Remark 3.1]{AG}). It is easy to see that the above isomorphism can be obtained from Theorem \ref{thm 31} by regarding $x\star y=\bar{x}\cdot \bar{y}$. Since it is more clear to view the spin group as the subgroup of $\SSO_8^{\times 3}$ preserving multiplication, we prefer to use para-octonion algebras in the following sections. 
}\end{rk}

From Theorem \ref{thm 31}, there exists an action $\rho$ on $\SSpin(C_s,\star)$ given by 
\[
\rho:(g_1,g_2,g_3)\mapsto (g_2,g_3,g_1).
\]
Further, for any $g\in\SSO(C_s,\star)$, let $\hat{g}$ be the automorphism of $(C_s,\star)$ given by $\hat{g}(x)=\overline{g(\bar{x})}$. Then $\hat{g}$ is also an element in $\SSO(C_s,\star)$, and there is an involution on $\SSpin(C_s,\star)$ given by:
\[
\theta:(g_1,g_2,g_3)\mapsto (\hat{g}_1,\hat{g}_3,\hat{g}_2).
\]
The involution $\theta$ and the $\ZZ/3\ZZ$-action $\rho$ together generate an action $S_3$ on $\SSpin(C_s,\star)$, whose group of fixed points, is precisely the exceptional group $G_2$:

\begin{prop}\label{prop 32}
For the given action of $S_3$ on $\SSpin(C_s,\star)$, we have a simple, simply connected group scheme
\[
G=\SSpin(C_s,\star)^{S_3}\simeq \SSpin(C_s,\star)^{A_3}\simeq \AAut(C_s,\star),
\]
where $\AAut(C_s,\star)$ is the automorphism group scheme
\[
\AAut(C_s,\star)(R):=\{g\in \SSO(C_s,\star)(R) \mid g(x\star y)=g(x)\star g(y) \},
\]
for any $F$-algebra $R$, $x,y\in C_s\otimes_{F} R$. This group $G$ is of type $G_2$.
\end{prop}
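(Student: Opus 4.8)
The plan is to extract both isomorphisms from the related-triple description of $\SSpin(C_s,\star)$ given in Theorem \ref{thm 31}, and then to deduce the structural claims from the classical theory of $\AAut(C_s)$. First I would compute the $A_3$-fixed points. Under the identification $\SSpin(C_s,\star)\simeq\RRT(C_s,\star)$ the group $A_3=\langle\rho\rangle$ acts by $\rho(g_1,g_2,g_3)=(g_2,g_3,g_1)$, so a related triple is $A_3$-fixed precisely when $g_1=g_2=g_3=:g$. For such a diagonal triple the three defining relations $g_i(x\star y)=g_{i+1}(x)\star g_{i+2}(y)$ all reduce to the single identity $g(x\star y)=g(x)\star g(y)$, i.e. to $g\in\AAut(C_s,\star)(R)$; conversely every $g\in\AAut(C_s,\star)(R)$ produces the related triple $(g,g,g)$. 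This gives the isomorphism $\SSpin(C_s,\star)^{A_3}\simeq\AAut(C_s,\star)$ functorially in $R$.

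Next I would treat $\SSpin(C_s,\star)^{S_3}\simeq\SSpin(C_s,\star)^{A_3}$. Since $A_3$ is normal in $S_3$ with quotient generated by the class of $\theta$, one has $\SSpin^{S_3}=(\SSpin^{A_3})^{\theta}$, and on the diagonal locus $\theta(g,g,g)=(\hat g,\hat g,\hat g)$. Hence the evident inclusion $\SSpin^{S_3}\subseteq\SSpin^{A_3}$ is an equality if and only if every $g\in\AAut(C_s,\star)$ satisfies $\hat g=g$, where $\hat g(x)=\ov{g(\bar x)}$. This is the heart of the matter. I would prove it by showing that an automorphism of $(C_s,\star)$ commutes with the conjugation $\bar x=\bb x,e\pp e-x$; as bar is an involution this gives $\hat g(x)=\ov{\ov{g(x)}}=g(x)$ immediately. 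In turn, commuting with bar follows once $g$ fixes the para-unit $e$: because $g$ preserves the bilinear form, $g(\bar x)=\bb x,e\pp g(e)-g(x)=\bb g(x),e\pp e-g(x)=\ov{g(x)}$.

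Thus everything comes down to the claim $g(e)=e$ for $g\in\AAut(C_s,\star)$. The plan here is to characterize $e$ intrinsically: it is the unique norm-one idempotent $f$ of $(C_s,\star)$ with $f\star x=x\star f=-x$ for all $x$ orthogonal to $f$ (equivalently $f\star x=\bb x,f\pp f-x$ for all $x$). Each of these conditions — idempotency, $q(f)=1$, and the para-unit relation — is preserved by $g$, using $q\circ g=q$ and $\bb g(\cdot),g(\cdot)\pp=\bb\cdot,\cdot\pp$; hence $g(e)$ is again such an element and so equals $e$. The uniqueness is a short computation in the standard basis $e_1,e_2,\{u_i,v_i\}$ of Table \ref{tab 1}: the relation $f\star e_1=e_1\star f$ already forces $f\in\FF e_1+\FF e_2$, after which idempotency together with $f\star u_i=-u_i$ pins down $f=e_1+e_2=e$. (This is exactly the identity $\AAut(C_s,\star)=\AAut(C_s,\cdot)$ of automorphism groups, which one may alternatively cite from \cite{KMRT}.) Combining the two steps yields $G=\SSpin(C_s,\star)^{S_3}\simeq\SSpin(C_s,\star)^{A_3}\simeq\AAut(C_s,\star)$.

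Finally, for the assertion that $G$ is simple, simply connected and of type $G_2$, I would invoke the classical theorem of Springer--Jacobson that the automorphism group of an octonion algebra is the connected simple algebraic group of type $G_2$ (\cite[\S 2.3]{Sp}); since $G_2$ has trivial fundamental group, simply connected and adjoint coincide, so no separate computation is required once $G\simeq\AAut(C_s)$ is in hand. I expect the main obstacle to be precisely the fixed-point identification $\SSpin^{S_3}=\SSpin^{A_3}$, that is, the verification that $\hat g=g$ on $\AAut(C_s,\star)$: this is the one place where one must use that automorphisms of the \emph{symmetric} composition algebra automatically respect the underlying unital structure, equivalently that they fix the para-unit. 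The other steps are a formal unwinding of Theorem \ref{thm 31} or a citation.
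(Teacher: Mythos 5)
Your proposal is correct, but it is worth noting that the paper does not actually prove this proposition: its ``proof'' is the single citation to \cite[\S 35]{KMRT}, so you have reconstructed the argument that the paper outsources. Your reconstruction follows the standard route of that reference: the $A_3$-fixed points of $\RRT(C_s,\star)$ are the diagonal related triples $(g,g,g)$, which are exactly the multiplicative automorphisms; the passage from $A_3$-fixed to $S_3$-fixed points reduces to $\hat g=g$, hence to $g$ commuting with conjugation, hence to $g(e)=e$; and this last point is settled by an intrinsic characterization of the para-unit. All of these steps check out, including the one genuinely delicate spot: uniqueness of the para-unit. Note that ``norm-one idempotent'' alone would not suffice --- for $f=ae_1+be_2$ the conditions $f\star f=f$ and $q(f)=ab=1$ admit the spurious solutions $a=\omega$, $b=\omega^2$ with $\omega^3=1$ --- so the relation $f\star u_i=-u_i$ (valid since $u_i\perp f$) is genuinely needed to force $b=1$; you correctly include it. Two minor quibbles: you write $\FF e_1+\FF e_2$ where the base field is $F$, not $\FF=\mathbb F$; and for the final structural claims it would be cleaner to cite \cite[Theorem 35.9]{KMRT} or \cite[\S 2.3]{Sp} for ``$\AAut$ of an octonion algebra is the split simple group of type $G_2$'' together with the observation that the $G_2$ root lattice equals its weight lattice (so simply connected is automatic), which is exactly the remark you make. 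What your approach buys over the paper's is a self-contained verification, functorial in $R$, that the $S_3$- and $A_3$-fixed subgroup schemes coincide --- a fact the paper uses later (e.g.\ in identifying $\calB(G)$ with $\calB(\SSpin)^{S_3}$) but never spells out.
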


\begin{proof}
	See\cite[\S 35]{KMRT}.
\end{proof}

Since there is an action $S_3$ on $\SSpin(C_s,\star)$, one can form the building $\calB(\SSpin(C_s,$ $\star))$ equipped with an action $S_3$. It is natural to ask whether the set of fixed points of $\calB(\SSpin(C_s,\star))$ under $S_3$ is the building of $\calB(G)$. When $p\neq 2,3$, Prasad-Yu in \cite{PY} show that it is indeed true. In the general case, the result is given by Gan-Yu in \cite{GY1}. So we have
\[
\calB(G)\simeq \calB(\SSpin(C_s,\star))^{S_3}.
\] 
Gan-Yu give an explicit description of the building $\calB(G)$, and show that there is a bijection between the building $\calB(G)$ and the set of graded lattice chains satisfying certain conditions.

Recall that a lattice $L$ in $(C_s,\star)$ is a finitely generated projective $\calO_F$-submodule in $C_s$, such that $L\otimes_{\calO_F}F=C_s$. A lattice chain $\{L_i\}_{i\in I}$ is a totally ordered set of lattices of $C_s$, such that
\[
L_{i_0}\subsetneq L_{i_1}\cdots \subsetneq L_{i_{n}}\subsetneq \pi^{-1}L_{i_0},
\]
for $I=\{i_0\leq i_1\leq\cdots\leq i_n\}$. A graded lattice chain is a pair $(\{L_i\}_{i\in I}, c)$ where $\{L_i\}_{i\in I}$ is a lattice chain, and $c$ is a strictly decreasing map from $\{L_i\}_{i\in I}$ to $\RR$ such that
\[
c(\lambda L)=\ord(\lambda)+c(L),
\]
for $\lambda\in F, L\in \{L_i\}_{i\in I}$. If for any lattice $L\in \{L_i\}_{i\in I}$, the dual space $L^{\vee}=\{x\in C_s \mid \bb x,L\pp\subset \calO_F\}$ is also in $\{L_i\}_{i\in I}$, then we call $\{L_i\}_{i\in I}$ a self-dual lattice chain. 

An order in $C_s$ is an $\calO_F$-lattice which is closed under multiplication (i.e., $x\star y\in L$ for all $x,y\in L$), and contains the para-unit $e$. This order is said to be maximal if it is not contained in any larger one. Since we always assume char$(F)\neq 2$ in this paper, we have

\begin{prop}\label {prop 33}
Suppose that $L$ is a lattice in $C_s$, which is closed under multiplication, then $L$ is a maximal order if and only if $L$ is self-dual with respect to $\bb \ ,\ \pp$.	
\end{prop}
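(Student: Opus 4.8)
The plan is to prove both implications through the single auxiliary fact that \emph{every} order satisfies $L\subseteq L^\vee$, reducing the proposition to understanding when the reverse inclusion holds. Granting this integrality statement, the implication ``self-dual $\Rightarrow$ maximal'' is immediate: if $L$ is self-dual and $L\subseteq L'$ with $L'$ another order, then dualizing reverses the inclusion and, using $L'\subseteq (L')^\vee$, one gets $L'\subseteq (L')^\vee\subseteq L^\vee=L\subseteq L'$, forcing $L'=L$. For the converse ``maximal $\Rightarrow$ self-dual'' the strategy is to show that an order with $L\subsetneq L^\vee$ can always be strictly enlarged to a larger order, contradicting maximality; equivalently, that every order is contained in a self-dual one.

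First I would establish $L\subseteq L^\vee$. The key preliminary observation is that the para-unit $e$ is primitive in any order $L$: if $e\in\pi L$, writing $e=\pi f$ with $f\in L$ gives $f\star f=\pi^{-2}(e\star e)=\pi^{-2}e\in L$, hence $e\in\pi^2 L$, and iterating yields $e\in\bigcap_n\pi^n L=0$, a contradiction. Next, since $x\star e=\bar x$ (a short computation from $e\star e=e$ and $e\star x=x\star e=-x$ on $e^{\perp}$), the order $L$ is stable under conjugation, so $\bb x,e\pp e=x+\bar x\in L$ and, by Lemma~\ref{lm 28}, $q(x)e=\bar x\star x\in L$. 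Primitivity of $e$ then forces $\bb x,e\pp\in\calO_F$ and $q(x)\in\calO_F$ for all $x\in L$. Finally, the adjunction $\bb x\star y,z\pp=\bb x,y\star z\pp$ (a restatement of Definition~\ref{def 26}) applied with $z=e$ gives the trace formula
\[
\bb x,y\pp=\bb x,e\pp\bb y,e\pp-\bb x\star y,e\pp,
\]
whose right-hand side lies in $\calO_F$ whenever $x,y\in L$ (as $x\star y\in L$). Hence $\bb x,y\pp\in\calO_F$ for all $x,y\in L$, i.e.\ $L\subseteq L^\vee$.

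The same adjunction, together with $(L^\vee)^\vee=L$, shows that $L^\vee$ is a two-sided $L$-module under $\star$: for $a\in L$ and $u\in L^\vee$ one has $\bb a\star u,z\pp=\bb a,u\star z\pp\in\bb L,L^\vee\pp\subseteq\calO_F$ for every $z\in L$, so $a\star u\in L^\vee$, and symmetrically $u\star a\in L^\vee$. This bimodule structure is what I would use to control enlargements. Given an order with $L\subsetneq L^\vee$, the finite $\calO_F$-module $L^\vee/L$ carries a nondegenerate induced pairing, and since $\mathrm{char}(F)\neq2$ it contains an isotropic direction; I would lift it to an isotropic element $x\in L^\vee$ and, invoking the description of the totally isotropic subspaces of $C_s$ as $x\star C_s$ (Remark~\ref{rk 29}), enlarge $L$ to a strictly larger lattice that is again an order.

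The main obstacle is precisely this last step: producing the enlargement so that it remains \emph{closed under} $\star$ (and stays bounded). The integrality and bimodule facts are formal consequences of the symmetric-composition identities, but stability of the enlarged lattice under multiplication is where the special geometry of para-octonion algebras---and the hypothesis $\mathrm{char}(F)\neq2$---must enter, exactly as in the triality situation of \cite{Zhao2}. Once a strict enlargement is produced for every non-self-dual order, maximality of $L$ forces $L=L^\vee$, completing the proof.
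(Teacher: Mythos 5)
The paper does not actually prove this proposition---its ``proof'' is a citation to \cite{GY1} (Proposition 5.1)---so your attempt must stand on its own. The half you complete, you complete correctly: the primitivity of $e$ in an order (via $f\star f=\pi^{-2}e$ and $\bigcap_n\pi^nL=0$), the resulting integrality $\bb x,e\pp,\,q(x)\in\calO_F$ for $x\in L$, and the trace identity $\bb x,y\pp=\bb x,e\pp\bb y,e\pp-\bb x\star y,e\pp$ together give $L\subseteq L^\vee$ for every order, and the sandwich $L'\subseteq(L')^\vee\subseteq L^\vee=L$ then shows a self-dual order is maximal. (One small repair: to get $a\star u\in L^\vee$ for $a\in L$, $u\in L^\vee$ you should cycle the symmetric-composition identity twice, $\bb a\star u,z\pp=\bb z\star a,u\pp\in\bb L,L^\vee\pp\subseteq\calO_F$; the intermediate form $\bb a,u\star z\pp$ you wrote already presupposes $u\star z\in L^\vee$, which is part of what is being proved.)

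The converse direction, however, has a genuine gap, and it is the substantive half of the statement. You must show that an order with $L\subsetneq L^\vee$ admits a \emph{strict} enlargement that is again closed under $\star$ and contains $e$. Your sketch---pick an isotropic direction in $L^\vee/L$, lift it to $x\in L^\vee$, and invoke the maximal totally isotropic subspaces $x\star C_s$ of Remark \ref{rk 29}---does not produce such an enlargement: nothing controls $x\star x$, $x\star L$, or $L\star x$ modulo the proposed larger lattice, and you explicitly flag this closure problem as unresolved. That the step is not formal is visible in the paper's own examples: $\LL_2$ and $\LL_3$ of Example \ref{ex 36} are non-self-dual orders whose maximal over-order is $\LL_1$, which is an enlargement of corank $>1$ and not of the form $L+\calO_F x$ for a single isotropic $x$; exhibiting it requires the explicit structure of $L^\vee/L$, exactly the case analysis on $l=\mathrm{lg}(L^\vee/L)\in\{2,4,6\}$ carried out in \S\ref{AffGrass2} (or the building-theoretic classification of \cite{GY1}). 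As written, ``maximal $\Rightarrow$ self-dual'' is asserted rather than proved.
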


\begin{proof}
	See \cite[Proposition 5.1]{GY1}. 
\end{proof}

\begin{rk}\label{rk 34}{\rm
	In \cite{GY1}, Gan-Yu define that an order $L$ in the split octonion algebra $(C_s,\cdot)$ is an $\calO_F$-lattice which is a unital ring. We transfer this definition to the para-octonion algebra $(C_s,\star)$ here. Note that any order is closed under conjugation, so that the condition $x\cdot y\in L$ is equivalent to 
	\[
	x\star y=\bar{x}\cdot\bar{y}\in L
	\]
	for all $x,y\in L$. Meanwhile, the unit $e$ in $(C_s,\cdot)$ becomes the para-unit $e$ in $(C_s,\star)$. Hence, we define that an order in a para-octonion algebra is an $\calO_F$-lattice which is closed under multiplication, and contains the para-unit $e$ as above.
}\end{rk}

We are now ready to describe vertices in the building $\calB(G)$ when $G=\AAut(C_s,\star)$. By fixing a maximal split torus $S\subset G$, we can identify the apartment $\mathcal{A}(S)$ with the real vector space $X_+(S)\otimes \RR$. Here $X_+(S)$ is the cocharacter group. Having fixed the origin, Gan-Yu show that the closed chamber in $\mathcal{A}(S)$ is a triangle with three vertices. Each vertex is corresponding to a type of lattices (or lattice chains) in $\calB(G)$.

\begin{thm}\label{thm 35}
There are three types of vertices in building $\calB(G)$. The vertices of type $i$ are in natural bijection with the orders satisfying certain conditions as follows:
\begin{itemize}
	\item type $1$: $L$ is a maximal order in $C_s$. 
	The corresponding graded lattice chain is a pair $(\{\pi^i L\},c)$:
	\[
	\cdots\subsetneq\pi L\subsetneq L\subsetneq \pi^{-1}L\subsetneq\cdots, 
	\]
	where $c(L)=0$.
	\item type $2$: $L$ is an order satisfying $L\subsetneq L^{\vee}\subsetneq \pi^{-1}L,~ L^{\vee}\star L^{\vee}\subset \pi^{-1}L$. The corresponding graded lattice chain is the following:
	\[
	\cdots\subsetneq \pi L^\vee \subsetneq L\subsetneq L^{\vee}\subsetneq \pi^{-1}L \subsetneq\cdots, 
	\]
	where $c(L)=0, ~c(L^\vee)=-\frac12$.
	\item type $3$: $L$ is an order satisfying $L\subsetneq L^{\vee}\subsetneq \pi^{-1}L$, with $M:=\pi L^{\vee}\star L^{\vee}+L$ is self-dual. The corresponding graded lattice chain is the following:
	\[
	\cdots\subsetneq \pi L^\vee\subsetneq L\subsetneq M \subsetneq L^{\vee}\subsetneq \pi^{-1}L \subsetneq\cdots, 
	\]
	where $c(L)=0, ~c(M)=-\frac13,~c(L^\vee)=-\frac23$.
 \end{itemize}
\end{thm}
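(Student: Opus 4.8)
The plan is to use the faithful $8$-dimensional representation $G=\AAut(C_s,\star)\hookrightarrow\SSO(C_s,\star)$ together with the identification $\calB(G)\simeq\calB(\SSpin(C_s,\star))^{S_3}$ of Proposition~\ref{prop 32}. I would first fix a maximal split torus $S\subset G$, identify $\mathcal{A}(S)$ with $X_+(S)\otimes\RR\cong\RR^2$, and note that the affine Weyl group is of type $\tilde{G}_2$, so its fundamental alcove is a triangle and there are exactly three $G$-orbits of vertices, i.e.\ three conjugacy classes of maximal parahoric subgroups. The combinatorial mechanism producing exactly three vertex types is that the $S_3$ realizing $G_2$ inside $\Spin_8$ acts as triality on the extended diagram $\tilde{D}_4$, permuting its three outer nodes (the vector and the two half-spin representations) and fixing the central and affine nodes; taking $S_3$-fixed points collapses $\tilde{D}_4$ to $\tilde{G}_2$ and produces precisely these three types.

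Next I would translate each vertex into lattice data in the quadratic space $(C_s,\bb\ ,\ \pp)$. Under $G\hookrightarrow\SSO(C_s,\star)$ the building map sends a vertex of $\calB(G)$ to a self-dual graded lattice chain that is stable under triality; because the triality-fixed elements of $\SSpin(C_s,\star)$ are exactly the automorphisms preserving $\star$ (Proposition~\ref{prop 32}), each lattice in such a chain is automatically closed under $\star$ and contains the para-unit $e$, hence is an order (cf.\ Remark~\ref{rk 34}). The three hyperspecial nodes of the $D_4$ picture, being permuted by triality, merge into a single self-dual lattice $L$; by Proposition~\ref{prop 33} this is exactly a maximal order, which is type~$1$. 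The two remaining fixed nodes give chains of period two and three respectively: in type~$2$ one has $L\subsetneq L^\vee\subsetneq\pi^{-1}L$, and in type~$3$ an extra self-dual lattice sits between $L$ and $L^\vee$, which one identifies with $M=\pi L^\vee\star L^\vee+L$.

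To turn this into the stated bijection I would, for each type, write down a standard representative out of the $\calO_F$-basis $e_1,e_2,\{u_i,v_i\}$ of $\Lambda\otimes_\ZZ\calO_F$ (rescaling an appropriate triality-symmetric collection of the $u_i,v_i$ by $\pi$), verify directly from the multiplication table (Table~\ref{tab 1}) that it satisfies the order and duality conditions listed, and check that its stabilizer in $G$ is the parahoric attached to the corresponding vertex. Transitivity of the $G$-action on orders of a fixed type---which upgrades injectivity to a bijection---then follows because these stabilizers are the full parahoric subgroups, so that the orbits are indexed by vertices of the alcove; alternatively it can be extracted from the classification of maximal totally isotropic subspaces of $C_s$ recalled in Remark~\ref{rk 29}.

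The main obstacle is the equivalence, at types~$2$ and $3$, between the extrinsic datum (a triality-symmetric self-dual lattice chain in $\SSO(C_s,\star)$) and the intrinsic $G_2$ conditions stated in the theorem---namely that closure under $\star$ together with the dual chain $L\subsetneq L^\vee\subsetneq\pi^{-1}L$ is equivalent to $L^\vee\star L^\vee\subset\pi^{-1}L$ in type~$2$, and to self-duality of $M$ in type~$3$. Showing these multiplicative conditions are both necessary and sufficient (rather than merely computing one standard example) is the delicate point; this is exactly the content of Gan--Yu's vertex classification, so the most economical route is to set up the translation into orders as above and then invoke \cite[Theorem 9.5]{GY1}.
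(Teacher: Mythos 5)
The paper offers no argument for this theorem beyond the citation to \cite[\S 9]{GY1}, and your proposal ultimately rests on the same citation --- after the outline you correctly identify that the equivalence between triality-stable lattice-chain data and the intrinsic multiplicative conditions is precisely Gan--Yu's vertex classification and invoke it --- so the two approaches coincide in substance. One detail of your folding heuristic is backwards, however: the three outer nodes of $\tilde{D}_4$ permuted by triality fold to the mark-$3$ vertex of $\tilde{G}_2$, i.e.\ to the type-$3$ vertex with its period-three chain $L\subsetneq M\subsetneq L^\vee$ (hence $c$-values in thirds), not to the hyperspecial type-$1$ vertex; type $1$ arises from the triality-\emph{fixed} outer node, which is the single self-dual lattice (maximal order), and type $2$ from the fixed central node, giving the period-two chain with $c$-values in halves.
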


\begin{proof}
	See \cite[\S 9]{GY1}.
\end{proof}

\begin{ex}\label{ex 36}{\rm 

Let $(C_s,\star)=\Lambda\otimes_{\ZZ}F$ be the split para-octonion algebra, with the basis $e_1,e_2, \{u_i, v_i\}_{i=1,2,3}$ as in Table \ref{tab 1}. 

(1). The trivial example is the lattice 
\[
\LL_1=\calO_F\bb e_1, e_2, u_1, u_2, u_3, v_1, v_2, v_3\pp.
\] 
It is easy to see that $\LL_1$ is self-dual, and closed under multiplication table, so $\LL_1$ is a maximal order by Proposition \ref{prop 33}. The corresponding vertex in $\calB(G)$ is of type 1. We call $\LL_1$ the standard lattice of type 1.

(2). Let 
\[
\LL_2=\calO_F\bb e_1, e_2, \pi u_1, u_2, u_3, v_1, \pi v_2, v_3\pp.
\]	
We see that $\LL_2$ is closed under multiplication.  The dual lattice is
\[
\LL_2^\vee=\calO_F\bb e_1, e_2, u_1, \pi^{-1}u_2, u_3, \pi^{-1}v_1, v_2, v_3\pp.
\]
It is easy to see that $\LL_2\subsetneq \LL_2^{\vee}\subsetneq \pi^{-1}\LL_2$, and 
\[
\LL_2^\vee\star \LL_2^\vee=\calO_F\bb \pi^{-1}e_1, \pi^{-1}e_2, u_1, \pi^{-1}u_2, \pi^{-1}u_3, \pi^{-1}v_1, v_2, \pi^{-1}v_3\pp,
\]
so that $\LL_2^\vee\star \LL_2^\vee=\pi^{-1}\LL_2$. Hence the corresponding lattice chain of $\LL_2$ is a vertex in $\calB(G)$. We call $\LL_2$ the standard lattice of type 2.

(3). Let 
\[
\LL_3=\calO_F\bb e_1, e_2, \pi u_1, \pi u_2, u_3, v_1,  v_2, \pi v_3\pp.
\]
We leave readers to check that $\LL_3$ is closed under multiplication, with $\LL_3\subsetneq \LL_3^{\vee}\subsetneq \pi^{-1}\LL_3$, and
\[
M=\pi\LL_3^\vee\star\LL_3^\vee+\LL_3= \calO_F\bb e_1, e_2, u_1, u_2, \pi^{-1}u_3, v_1,  v_2, \pi v_3\pp
\] 
is self-dual. We call $\LL_3$ the standard lattice of type 3.
}\end{ex}

To each vertex $x$ of type $i$ in $\calB(G)$, the stabilizer of the corresponding lattice (lattice chains) $L_x$ in $G(F)$ is a maximal parahoric subgroup $\scrG_x$. From \cite[Theorem 7.2]{GY1}, the hyperspecial points in $\calB(G)$ corresponding to vertices of type 1. Now we can define the parahoric subgroup $\scrG$ corresponding to the hyperspecial points: Recall that $\Lambda$ is the free $\ZZ$-module of rank $8$, with $C_s=\Lambda\otimes_{\ZZ}F$. Consider $\LL_1=\Lambda\otimes_{\ZZ}\calO_F$  as a submodule in $C_s$. We denote by $\SSO(\Lambda)$ the subscheme of points in the isomorphism group scheme $\Isom(\Lambda)$ that preserve the form and whose determinant is equal to 1.

\begin{defi}\label{def 37}
	The parahoric hyperspecial subgroup corresponding to hyperspecial points $\scrG$ is an affine group scheme over $\Spec(\calO_F)$  that represents the functor from $\calO_F$-algebras to groups that sends R to
	\[
	\scrG(R)=\{g\in \SSO(\Lambda)(R) \mid g(x\star y)=g(x)\star g(y) \},
	\]
	for all $x,y\in \LL_1\otimes_{\calO_F}R$.
\end{defi}

 Similarly, we can use $\LL_2$ (resp. $\LL_3$) in Example \ref{ex 36} to define the parahoric subgroup $\scrG_2$ (resp. $\scrG_3$) (see Definition \ref{def 63}).


\section{Affine Grassmannians for $\scrG$}\label{AffGrass1}

\subsection{Definition}\label{Definition} In this subsection, we will define affine Grassmannians by loop groups. Affine Grassmannians are represented by an ind-scheme. We refer \cite{BL} \cite{BD} for important results on the structure of loop groups and associated affine Grassmannians.

Let $k$ be a perfect field with char$(k)\neq 2$, and let $G_0$ be an algebraic group over $\Spec(k)$. The functor $LG_0$ is from the category of $k$-algebras to sets that sends $R$ to
\[
LG_0(R)=G_0(R\llp t\rlp).
\]
This functor is represented by an ind-scheme, called the loop group associated to $G_0$. We also consider the positive loop group $L^+G_0$, which is the functor on the category of k-algebras, given by
\[
L^+G_0(R)=G_0(R\lp t\rp).
\]
Then $L^+G_0 \subset LG_0$ is a subgroup functor, and the fpqc-quotient $\Gr_{G_0}=LG_0/L^+G_0$ is by definition the affine Grassmannian associated to $G_0$. The fpqc-sheaf $\Gr_{G_0}$ is also represented by an ind-scheme.

In \cite{PR3}, Pappas-Rapoport develop this definition for ``twisted" loop groups as follows: Consider $G$ an algebraic group over $\Spec(k\llp t\rlp)$. The algebraic loop group $LG$ is the functor form the category of $k$-algebras to the sets given by
\[
LG(R)=G(R\llp t\rlp).
\]
Since $R\llp t\rlp$ is a $k\llp t\rlp$-algebra, this definition makes sense. Note that $R\lp t\rp$ is not a $k\llp t\rlp$-algebra, so we cannot use $G$ to define positive loop groups now. By choosing a parahoric subgroup $\scrG$ of $G$, we denote by $L^+\scrG$ the positive loop group that represents the functor form $k$-algebras to sets that sends $R$ to
\[
L^+\scrG(R)=\scrG(R\lp t\rp).
\]
Note that when $G =G_0\otimes_k k\llp t\rlp$, we recover the previous definition in the untwisted case. Since the generic fiber $\scrG_\eta$ of $\scrG$ is always equal to $G$, our affine Grassmannian $\Gr_\scrG$ associated to $\scrG$ is the fpqc-sheaf $L\scrG_\eta/L^+\scrG=LG/L^+\scrG$. We refer to \cite{PR3} for more details. 

A classical prototype is the affine Grassmannian for the  general linear group $\GL_n$. This $\Gr_{\GL_n}$ can be viewed as a functor from $k$-algebras to sets that sends $R$ to
\[
\{L \mid \text{$L$ is a lattice in $R\llp t\rlp^n$, such that $L[t^{-1}]\simeq R\llp t\rlp^n$}\}.
\]

In the rest of this section, let $k$ be a perfect field with char$(k)\neq 2$. The field $F=k\llp t\rlp$ is a discrete valuation field with the ring of integers $\calO_F=k\lp t\rp$, and uniformizer $t\in k\lp t\rp$. Let $(C_s,\star)=\Lambda\otimes_{\ZZ} k\llp t\rlp$ be the split para-octonion algebra over $F$, and $G=\AAut(C_s,\star)$ be the automorphism group scheme of split para-octonion algebra, which is a group of type $G_2$. Consider the hyperspecial parahoric subgroup $\scrG$ corresponding to hyperspecial points in $\calB(G)$ as defined in Definition \ref{def 37}. Then the quotient fpqc-sheaf 
\[
\Gr_{\scrG}=LG/L^+\scrG
\] 
is by definition the affine Grassmannian for  $G_2$.

Our goal in this section is to give an explicit description of this affine Grassmannian $\Gr_{\scrG}$. Inspired by the results from \cite{GY1}, we already know the $k$-points in $\Gr_{\scrG}$: since $L^+\scrG(k)=\scrG(k\lp t\rp)$ is the stabilizer of the standard lattice $\LL_1$, and $LG(k)=G(k\llp t\rlp)$ acts transitively on vertices of type 1, there is a bijection between $k$-points of $\Gr_{\scrG}$ and the set of maximal orders in $C_s$. Our first main theorem is the following:

\begin{thm}\label{thm 41}
	    There is an $LG$-equivariant isomorphism
		\[
		\Gr_{\scrG}\simeq \mathscr{F}
		\]	
		where the functor $\mathscr{F}$ sends a $k$-algebra $R$ to the set of  $R\lp t\rp $-lattices $L$ of $C_s\otimes_{k\llp t\rlp} R\llp t\rlp$ such that
		\begin{itemize}
			\item [(1)] $L$ is self dual under the bilinear form $\bb\ ,\ \pp$, i.e., $L\simeq \Hom_{R\lp t\rp}(L, R\lp t\rp )$.
			\item [(2)] $L$ is an order, i.e., $e\in L$, $L\star L\subset L$.
	    \end{itemize}
\end{thm}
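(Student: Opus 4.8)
The plan is to exhibit a natural transformation $\mathscr{F}\to \Gr_{\scrG}$ that identifies the two functors after verifying both are fpqc-sheaves and that the map is an isomorphism on $R$-points for every $k$-algebra $R$. First I would define the map going the other way: a point of $\Gr_{\scrG}(R)=(LG/L^+\scrG)(R)$ is represented, fpqc-locally on $\Spec R$, by an element $g\in LG(R')=G(R'\llp t\rlp)$, and I would send $g$ to the lattice $L=g\cdot\LL$, where $\LL=\LL_1\otimes_{\calO_F}R\lp t\rp$ is the standard lattice of type $1$ base-changed to $R\lp t\rp$. Because $g$ is an automorphism of the para-octonion algebra $C_s\otimes_{k\llp t\rlp}R\llp t\rlp$ preserving $\star$ and the form $\bb\ ,\ \pp$, and because $\LL$ is itself an order which is self-dual (Example \ref{ex 36}(1), Proposition \ref{prop 33}), the translate $g\cdot\LL$ inherits conditions (1) and (2): it contains $e=g(e)$ since $g$ fixes the para-unit, it is closed under $\star$ since $g(x)\star g(y)=g(x\star y)$, and it is self-dual since $g\in\SSO$ preserves $\bb\ ,\ \pp$. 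One checks this is independent of the representative $g$ (changing $g$ by $L^+\scrG(R')$ fixes $\LL$) and descends along the fpqc-cover, giving a well-defined natural transformation $\Gr_{\scrG}\to\mathscr{F}$; $LG$-equivariance is immediate from the construction.

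The substance is proving this map is an isomorphism, for which it suffices — since both sides are fpqc-sheaves and $\scrG$ is smooth so the quotient is locally trivial — to show surjectivity and injectivity on points after an fpqc (indeed \etale) base change. Injectivity amounts to: if $g\cdot\LL=g'\cdot\LL$ then $g^{-1}g'$ stabilizes $\LL$, hence lies in $L^+\scrG$; this follows because $\scrG$ is exactly the stabilizer of $\LL$ inside $G$ by Definition \ref{def 37}. The core is surjectivity: given $L\in\mathscr{F}(R)$, I must produce, fpqc-locally, an automorphism $g$ of $(C_s\otimes R\llp t\rlp,\star)$ carrying $\LL$ to $L$. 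The strategy, following \cite{Zhao2}, is to show that any self-dual order $L$ admits a \emph{standard basis}: a basis $e_1',e_2',\{u_i',v_i'\}$ of $L$ over $R\lp t\rp$ whose multiplication table under $\star$ and whose values under $\bb\ ,\ \pp$ coincide exactly with those of the standard basis in Table \ref{tab 1}. Such a basis determines an automorphism $g$ sending the standard basis of $\LL$ to it, and this $g$ is an $R\llp t\rlp$-point of $G$ precisely because it respects both $\star$ and the form.

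The main obstacle is thus the construction of the standard basis, and here self-duality does the heavy lifting. The plan is to decompose $L$ using the idempotent structure forced by the order condition: the para-unit $e\in L$ together with $e\star e=e$ (more precisely the associated unital multiplication $x\cdot y=\bar x\star\bar y$ with unit $e$) lets me split off the two rank-one pieces $R\lp t\rp e_1'$, $R\lp t\rp e_2'$ as the images of orthogonal idempotents obtained from $e$, and then cut $L$ into the ``$a$", ``$b$", $W$- and $W^\vee$-blocks mirroring the matrix description of $\Lambda$. Within the rank-$3$ blocks I would use Lemma \ref{lm 27} and Lemma \ref{lm 28} to force the cross-products $u_i'\star u_j'$ to land in the dual block and the pairing to be a perfect duality between the two blocks; self-duality guarantees the relevant Gram matrix is invertible over $R\lp t\rp$, so after an fpqc base change trivializing the finitely generated projective modules and diagonalizing, I can normalize the basis to match Table \ref{tab 1} exactly. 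I expect the bookkeeping of signs and the verification that all the off-table products genuinely vanish (rather than merely lying in $L$) to be the delicate part, but no new idea beyond the identities of \S\ref{Octonion} and the invertibility coming from condition (1) should be required.
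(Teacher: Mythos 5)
Your proposal is correct and follows essentially the same route as the paper: reduce to a strictly henselian (étale-local) base, split the para-unit $e$ into two isotropic idempotents $e_1,e_2$ of the associated unital multiplication, decompose $L$ as $R\lp t\rp e_1\oplus R\lp t\rp e_2\oplus(L_0\star e_1)\oplus(L_0\star e_2)$, and use Lemmas \ref{lm 27}--\ref{lm 28} together with self-duality and $L\star L\subset L$ to normalize a basis matching Table \ref{tab 1}. The only detail left implicit in your sketch --- how the isotropic splitting of $e$ is actually produced --- is handled in the paper by first choosing a hyperbolic basis of $L$ (available by self-duality over a strictly henselian ring) and cutting $e$ along the resulting Lagrangian decomposition, which fits within your stated plan.
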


This theorem gives a bijection between $R$-points in the affine Grassmannian for  $\scrG$ and a certain set of $R\lp t\rp$-lattices in $C_s\otimes_{k\llp t\rlp} R\llp t\rlp$ that are self-dual and closed under multiplication.

	
\subsection{Proof of Theorem \ref{thm 41}}\label{ProofofTheorem}

In \cite{PR3}, Pappas-Rapoport show that there is a bijection between the affine Grassmannians for unitary groups and the space of self-dual lattice chains for a hermitian form. Recall that $\LL_1=k\lp t\rp \bb e_1, e_2, u_i, v_i\pp_{i=1,2,3}$ is the standard lattice of type 1 in the split para-octonion algebra $C_s=\Lambda\otimes_{\ZZ}k\llp t\rlp$. Similar to the proof of \cite[Theorem 4.1]{PR3}, it suffices to check the following two parts:
	\begin{itemize}
		\item [(i)] For any $R$, the subgroup of $LG(R)$ consisting of elements that stabilize the standard lattice $\LL_1\otimes_{k\lp t\rp}R\lp t\rp$ agrees with $L^+\scrG(R)$. 
		\item [(ii)] Let $L\in \mathscr{F}(R)$. Then locally for the \etale topology on $R$, there exists $g\in  L\scrG_{\eta}(R)$ such that $L=g(\LL_1\otimes_{k\lp t\rp}R\lp t\rp)$. 
	\end{itemize}

It is easy to see part (i). By Definition \ref{def 37}, the $R\lp t\rp$-points of $\scrG$ is the stabilizer of the base change of the standard lattice $\LL_1\otimes_{k\lp t\rp}R\lp t\rp$ (we also call $\LL_1\otimes_{k\lp t\rp}R\lp t\rp$ the standard lattice in this subsection if there is no confusion). We will focus to prove part (ii). 

Observe that there is a natural morphism $LG\rightarrow \scrF$, given by $g\mapsto g(\LL_1\otimes R\lp t\rp)$. Indeed, set $L=g(\LL_1\otimes R\lp t\rp)$. We have 
\[
L\star L=g(\LL_1\otimes R\lp t\rp)\star g(\LL_1\otimes R\lp t\rp)=g((\LL_1\otimes R\lp t\rp)\star (\LL_1\otimes R\lp t\rp))\subset g(\LL_1\otimes R\lp t\rp)
\]
since $g$ preserves the multiplication and $\LL_1$ is closed under multiplication. Also $L$ is self-dual since $g$ preserves the form $\bb \ ,\ \pp$. Thus, $L\in \scrF(R)$. By part (i), we see that the stabilizer of the standard lattice $\LL_1\otimes R\lp t\rp$ is $L^+\scrG(R)$, and so we obtain
\[
LG/L^+\scrG\rightarrow \scrF.
\]
It is enough to show part (ii) for $R$ local strictly henselian.


In the rest of this subsection, we assume that $(R,\calM)$ is a local strictly henselian ring with maximal ideal $\calM$. For any $L\in\scrF(R)$, by Proposition \ref{prop 33}, $L$ is a maximal order in $C_s\otimes_{k\llp t\rlp}R\llp t\rlp$, which contains a para-unit element $e\in L$. Since $L$ is self-dual and the residue field of $R$ is separably closed, there exists a basis $\{x_i\}_{i=1}^8$ of $L$ such that $\bb x_i,x_{9-j}\pp=\delta_{ij}$. Consider $e=\sum a_i x_i$ for all $a_i\in R\lp t\rp$. We set:
\[
e_1=\sum_{i=1}^4 a_i x_i,\quad e_2=\sum_{i=5}^8 a_i x_i.
\]
Then $q(e_1)=q(e_2)=0$, and $\bb e_1,e_2\pp=1$ by $q(e)=1$. Moreover, consider the minimal polynomial for $e_1$ in the split octonion algebra $C_s\otimes_{k\llp t\rlp}R\llp t\rlp$:
\[
e_1\cdot e_1-\bb e_1,e\pp e_1+q(e_1)e=0. 
\]
We get $e_1\cdot e_1=e_1$ by $\bb e_1,e\pp=1$ and $q(e_1)=0$. Thus in the split para-octonion algebra $(C_s\otimes_{k\llp t\rlp}R\llp t\rlp,\star)$, we have
\[
e_2\star e_2=\bar{e}_2\cdot\bar{e}_2=e_1,
\]
by $\bar{e}_2=\bb e_2,e\pp e-e_2=e_1$. Similarly, we have $e_1\star e_1=e_2$. For $e_1\star e_2$, it is easy to see that
\begin{flalign*}
			e_1\star e_2&=\bar{e}_1\cdot\bar{e}_2=e_2\cdot e_1\\
			&=(e-e_1)\cdot e_1=0.
\end{flalign*} 
Also we get $e_2\star e_1=0$ by similar calculation. Therefore, the sublattice $R\lp t\rp e_1\oplus R\lp t\rp e_2$ is a hyperbolic space satisfying:
\begin{equation}\label{eq42}
\begin{array}{l}
		e_1\star e_1=e_2, \quad e_2\star e_2=e_1,\\
		e_1\star e_2=e_2\star e_1=0,\\
		q(e_1)=q(e_2)=0, \quad \bb  e_1,e_2\pp =1.	
\end{array}	
\end{equation}

Readers may notice that the above $e_1, e_2$ play the same roles as for $e_1, e_2$ in the Table \ref{tab 1} (that is the reason we keep the same symbol). We claim that $e_1, e_2$ are primitive elements in $L$ (elements that extends to a basis of $L$). Since $(R,\calM)$ is a local ring, the canonical quotient map:
\[
R\lp t\rp\rightarrow R\rightarrow \kappa=R/\calM
\]
gives a base change of $L\rightarrow L\otimes_{R\lp t\rp}\kappa$. The image of $e_1, e_2$ in $L\otimes_{R\lp t\rp}\kappa$ are non-zero elements by $\bb e_1,e_2\pp=1$. By Nakayama's lemma, we can extend $\{e_1, e_2\}$ as a basis of $L$. So they are primitive elements. Choose $y_3,...,y_8\in L$ such that $\{e_1, e_2, y_3, ... , y_8\}$ is a basis of $L$. Consider the sublattice $L_0\subset L$ that orthogonal to $\{e_1, e_2\}$,
\[
L_0:=\{x\in L \mid \bb  x,e_1\pp =0,~ \bb  x, e_2\pp =0\}.
\]
For any $x\in L$, we can write $x$ as
$
x=b_1 e_1+b_2 e_2+\sum_{i=3}^8 b_i y_i,
$
for $b_i\in R\lp t\rp$. By replacing $y_i$ by 
$
y_i'=y_i-\bb y_i, e_2\pp e_1-\bb y_i, e_1\pp e_2,
$
we get $\bb y_i',e_1\pp=\bb y_i', e_2\pp=0$. Thus, replacing $y_i$ by $y_i'$ if necessary, we can always assume that $y_i\in L_0$. Hence $L=R\lp t\rp e_1\oplus R\lp t\rp e_2\oplus L_0$, and $L_0$ is a sublattice of rank 6. Note that for any $x\in L_0$, we have $\bb x,e\pp=\bb x,e_1+e_2\pp=0$. Thus $x\star e=e\star x=-x$ since $e$ is the para-unit element. Set $L_i=L_0\star e_i$ for $i=1,2$. By Remark \ref{rk 29}, $L_i$ is a totally isotropic subspace with rank $\leq 3$.

\begin{lm}\label{lm 51} We have

(1) $L_i\subset L_0$ for $i=1,2$.
 
(2) $e_i\star L_i=0,~ L_i\star e_{i+1}=0$ for $i=1,2\mod 2$.
\end{lm}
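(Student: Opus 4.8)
The plan is to deduce both parts purely formally, without reference to any basis, from three inputs: the symmetric composition identities of Lemma \ref{lm 28}, the symmetry relation $\bb x\star y,z\pp=\bb y\star z,x\pp$ of Definition \ref{def 26}, and the multiplication relations \eqref{eq42} among $e_1,e_2$, together with the defining property $\bb x,e_1\pp=\bb x,e_2\pp=0$ of $x\in L_0$. All of these are bilinear identities, hence remain valid after base change to $R\lp t\rp$, so no special care about the coefficient ring is required.

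For part (1), I would first note that $L_i=L_0\star e_i\subset L\star L\subset L$, since $L_0\subset L$, $e_i\in L$, and $L$ is an order; thus it only remains to check orthogonality to $e_1$ and $e_2$. Here the symmetry of $\bb\ ,\ \pp$ does all the work: for $x\in L_0$ one computes $\bb x\star e_1,e_1\pp=\bb e_1\star e_1,x\pp=\bb e_2,x\pp=0$ and $\bb x\star e_1,e_2\pp=\bb e_1\star e_2,x\pp=0$, using \eqref{eq42} and $x\in L_0$. This gives $L_1\subset L_0$, and the identical computation with $e_1,e_2$ interchanged gives $L_2\subset L_0$.

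For part (2), I would apply the linearized identities of Lemma \ref{lm 28}. To obtain $e_i\star L_i=0$, take $x\in L_0$ and apply Lemma \ref{lm 28}(2) to $e_i\star(x\star e_i)$ with both outer arguments equal to $e_i$, yielding $2\,e_i\star(x\star e_i)=\bb e_i,e_i\pp x=2q(e_i)x=0$; since $\mathrm{char}(k)\neq2$ and $q(e_i)=0$, this forces $e_i\star(x\star e_i)=0$. To obtain $L_i\star e_{i+1}=0$, apply Lemma \ref{lm 28}(1) to $(x\star e_i)\star e_{i+1}$, which gives $(x\star e_i)\star e_{i+1}+(e_{i+1}\star e_i)\star x=\bb x,e_{i+1}\pp e_i$; the right-hand side vanishes because $\bb x,e_{i+1}\pp=0$, and the second left-hand term vanishes because $e_{i+1}\star e_i=0$ by \eqref{eq42}, so $(x\star e_i)\star e_{i+1}=0$. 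This covers both $i=1$ (using $e_2\star e_1=0$) and $i=2$ (using $e_1\star e_2=0$, as $i+1\equiv1\bmod 2$).

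The argument is entirely mechanical, so there is no genuine obstacle; the only thing to watch is the bookkeeping of indices modulo $2$ and the selection of the correct one of the two identities in Lemma \ref{lm 28}. The single conceptual point worth stating is that the isotropy $q(e_1)=q(e_2)=0$, combined with $\mathrm{char}(k)\neq2$, is precisely what upgrades the identity $2\,e_i\star(x\star e_i)=\bb e_i,e_i\pp x$ to the vanishing $e_i\star L_i=0$.
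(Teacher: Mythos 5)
Your proof is correct and follows essentially the same route as the paper: part (1) is the identical computation with the symmetry relation $\bb x\star y,z\pp=\bb y\star z,x\pp$, and part (2) uses the same instance of Lemma \ref{lm 28}(1) for $L_i\star e_{i+1}=0$. The only cosmetic difference is that for $e_i\star L_i=0$ you re-derive the specialization $x\star(y\star x)=q(x)y$ from the linearized identity (using ${\rm char}(k)\neq 2$) rather than quoting the ``in particular'' clause of Lemma \ref{lm 28} directly, and you make explicit the (implicit in the paper) fact that $L_0\star e_i\subset L$ because $L$ is an order.
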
 
 
\begin{proof}
	(1). For any $x\in L_1$, we can write $x$ as $x_1\star e_1$ for $x_1\in L_0$. By Definition \ref{def 26}, we obtain
	\[
	\begin{array}{l}
	\bb x_1\star e_1, e_1\pp=\bb e_1\star e_1,x_1\pp=\bb e_2,x_1\pp=0,\\
	\bb x_1\star e_1, e_2\pp=\bb e_1\star e_2, x_1\pp=0.	
	\end{array}
	\]
	So that $x\in L_0$, and $L_1\subset L_0$. We have a similar result for $L_2$.
	
	(2). Again, we only consider $L_1$ case. For any $x=x_1\star e_1\in L_1$, we have $e_1\star(x_1\star e_1)=q(e_1)x_1=0$ by Lemma \ref{lm 28}. Hence $e_1\star L_1=0$. For $(x_1\star e_1)\star e_2$, consider
	\[
	(x_1\star e_1)\star e_2+(e_2\star e_1)\star x_1=\bb x_1,e_2\pp e_1.
	\]
	Since $e_2\star e_1=0$, and $\bb x_1,e_2\pp=0$, we get $x\star e_2=(x_1\star e_1)\star e_2=0$ for all $x\in L_1$. The proof for $L_2$ case is similar.
\end{proof} 
 
For any $x\in L_0$, we obtain
\begin{flalign*}
		   x&=\bb e_1.e_2\pp x\\
			&=(e_1\star x)\star e_2+(e_2\star x)\star e_1.
\end{flalign*} 
by Lemma \ref{lm 28}. Since $e_1\star x, e_2\star x\in L_0$, we get $L_0=L_1+L_2$. Each $L_i$ is totally isotropic with rank $\leq 3$, so that $L_0=L_1\oplus L_2$. Since $L_1, L_2$ are totally isotropic, and $\bb \ ,\ \pp$ restricted to $L_0$ is nondegenerate, we see that $L_1, L_2$ are in duality, i.e., the isomorphism $L_1\rightarrow L_2^\vee$ given by $x\mapsto \bb x,-\pp$ induce $L_1\simeq \Hom_{R\lp t\rp}(L_2,R\lp t\rp)$. From that, we can set the basis for $L_i$:
\[
L_1=R\lp t\rp\bb u_1,u_2,u_3\pp,\quad L_2=R\lp t\rp\bb v_1,v_2,v_3\pp.
\]
with $\bb u_i, v_i\pp=\delta_{ij}$. 

\begin{lm}\label{lm 52}
We have 
\[
\begin{array}{c}
u_i\star e_1=e_2\star u_i=-u_i,\\
v_i\star e_2=e_1\star v_i=-v_i,	
\end{array}
\]
for $i=1,2,3$.
\end{lm}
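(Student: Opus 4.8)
The plan is to derive all four identities from a single source: the para-unit property of $e = e_1 + e_2$ together with the vanishing relations of Lemma \ref{lm 51}(2), so that no computation beyond bookkeeping is needed.

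First I would record that $u_i \in L_1 \subset L_0$ and $v_i \in L_2 \subset L_0$ by Lemma \ref{lm 51}(1). Hence each of these elements is orthogonal to $e$, since every $x \in L_0$ satisfies $\bb x, e\pp = 0$ (as noted just before Lemma \ref{lm 51}). Consequently the para-unit relation $x \star e = e \star x = -x$ applies verbatim, giving
\[
u_i \star e = e \star u_i = -u_i, \qquad v_i \star e = e \star v_i = -v_i.
\]

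Next I would expand each product using $e = e_1 + e_2$ and invoke Lemma \ref{lm 51}(2) to annihilate the extraneous terms. For $u_i \in L_1$: writing $u_i \star e = u_i \star e_1 + u_i \star e_2$ and using $L_1 \star e_2 = 0$ (the case $i=1$ of Lemma \ref{lm 51}(2)) leaves $u_i \star e_1 = -u_i$; dually, $e \star u_i = e_1 \star u_i + e_2 \star u_i$ with $e_1 \star L_1 = 0$ gives $e_2 \star u_i = -u_i$. For $v_i \in L_2$ the symmetric bookkeeping applies: $L_2 \star e_1 = 0$ (the case $i=2$, using $e_3 = e_1 \bmod 2$) yields $v_i \star e_2 = -v_i$, and $e_2 \star L_2 = 0$ yields $e_1 \star v_i = -v_i$.

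This argument is entirely mechanical once Lemma \ref{lm 51} is in hand, so I do not expect a genuine obstacle. The only point requiring care is tracking the index shifts in the vanishing relations of Lemma \ref{lm 51}(2) modulo $2$, so as to pair each $L_i$ with the correct idempotent factor $e_1$ or $e_2$. I would write out the $u_i$ identities in full and simply remark that the $v_i$ identities follow by the same computation with the roles of $e_1$ and $e_2$ interchanged.
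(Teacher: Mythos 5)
Your proof is correct and is essentially identical to the paper's: both derive the identities from the para-unit relation $x\star e=e\star x=-x$ for $x\in L_0$, expand $e=e_1+e_2$, and kill the extra term using the vanishing relations of Lemma \ref{lm 51}(2). The index bookkeeping ($L_1\star e_2=0$, $e_1\star L_1=0$, $L_2\star e_1=0$, $e_2\star L_2=0$) is handled correctly.
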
 
 
\begin{proof}
We can directly get these equations from Lemma \ref{lm 51} (2): Since for any $x\in L_0$, we have $e\star x=x\star e=-x$. Thus $u_i\star e=u_i\star (e_1+e_2)=-u_i$. By $u_i\star e_2=0$, we obtain $u_i\star e_1=-u_i$. Similarly we can get the rest equations.
\end{proof} 

So far, we find the basis $e_1,e_2, \{u_i,v_i\}_{i=1,2,3}$ of $L$. We claim that the multiplication table of this basis is the same as Table \ref{tab 1}. It remains to check $u_i\star v_j$ (resp. $v_i\star u_j$), and $u_i\star u_j$ (resp. $v_i\star v_j$) for $i=1,2,3$. Let us check $u_i\star v_j$ first. By Lemma \ref{lm 52}, and Lemma \ref{lm 28}, we deduce that
\begin{flalign*}
u_i\star v_j &=-(u_i\star e_1)\star v_j
              =	(v_j\star e_1)\star u_i-\bb u_i,v_j\pp e_1\\
             &= -\delta_{ij}e_1.
\end{flalign*}
Similarly, we also get 
$
v_i\star u_j=-\delta_{ij}e_2
$
for $i=1,2,3$. 

Next, for $u_i\star u_j$ (resp. $v_i\star v_j$), we claim that $u_i\star u_j\in L_2$. It is easy to see that $\bb u_i\star u_j,e_1\pp=\bb u_j\star e_1, u_i\pp=-\bb u_j,u_i\pp$. Here $\bb u_j,u_i\pp=0$ since $L_i$ is totally isotropic. Also from $u_j\star e_2=0$, we get $\bb u_i\star u_j,e_2\pp=\bb u_j\star e_2, u_i\pp=0$. Thus, $u_i\star u_j\in L_0$. We can also get $v_i\star v_j\in L_0$ by the same way. Moreover, by Lemma \ref{lm 28}, we obtain
\begin{flalign*}
	(u_j\star u_i)\star e_2+(e_2\star u_i)\star u_j=\bb u_j,e_2\pp u_i=0,
\end{flalign*}
which implies
\[
	u_i\star u_j=-(e_2\star u_i)\star u_j=(u_j\star u_i)\star e_2,
\]
and so $u_i\star u_j\in L_0\star e_2=L_2$. Take $u_1\star u_2$ for instance. Since
\[
\begin{array}{l}
\bb u_1\star u_2, u_1\pp=-\bb u_1\star u_2,u_1\star e_1\pp=-q(u_1)\bb u_2,e_1\pp,\\
\bb u_1\star u_2, u_2\pp=-\bb u_1\star u_2,e_2\star u_2\pp=-q(u_2)\bb u_1,e_2 \pp,	
\end{array}
\] 
by Lemma \ref{lm 27}, we get $\bb u_1\star u_2, u_1\pp=\bb u_1\star u_2, u_2\pp=0$ as $q(u_1)=q(u_2)=0$. From that, we can assume 
\begin{equation}\label{eq45}
	u_1\star u_2=\lambda v_3
\end{equation}
for some $\lambda\in R\lp t\rp$. Multiplying $v_1$ on the right side of equation (\ref{eq45}), we get $(u_1\star u_2)\star v_1=\lambda(v_3\star v_1)$. Since $(u_1\star u_2)\star v_1+(v_1\star u_2)\star u_1=\bb u_1,v_1\pp u_2$ by Lemma \ref{lm 28}, and $v_1\star u_2=0$, $\bb u_1,v_1\pp=1$ from the above, we have $(u_1\star u_2)\star v_1=u_2$. Hence $v_3\star v_1= \lambda^{-1} u_2$. Therefore, $\lambda, \lambda^{-1}\in R\lp t\rp$ by $L\star L\subset L$, which implies $\lambda$ is a unit in $R\lp t\rp$. We can assume $\lambda=1$, so that 
\[
u_1\star u_2=v_3. 
\]
We perform similar calculations for other $u_i*u_j$ and $v_i*v_j$. The multiplication tables are the following:\\
	
	\begin{minipage}{\textwidth}
		\begin{minipage}[t]{0.45\textwidth}
			\centering
			\makeatletter\def\@captype{table}\makeatother\caption{$u_i*u_j$}\vspace{-3mm}
			\begin{tabular}{|c|c|c|c|}\hline
				$*$& $u_1$&$u_2$&$u_3$\\
				\hline
				$u_1$& $0$& $v_3$ & $-v_2$\\
				\hline
				$u_2$& $-v_3$& 0&$v_1$\\
				\hline
				$u_3$& $v_2$& $-v_1$&0\\
				\hline
			\end{tabular}
		\end{minipage}
		\begin{minipage}[t]{0.45\textwidth}
			\centering
			\makeatletter\def\@captype{table}\makeatother\caption{$v_i*v_j$}\vspace{-3mm}
			\begin{tabular}{|c|c|c|c|}\hline
				$*$& $v_1$&$v_2$&$v_3$\\
				\hline
				$v_1$& 0& $u_3$ & $-u_2$\\
				\hline
				$v_2$& $-u_3$& 0&$u_1$\\
				\hline
				$v_3$& $u_2$& $-u_1$&0\\
				\hline
			\end{tabular}
		\end{minipage}
	\end{minipage}\\

From the above, we finish the multiplication table of the basis $e_1, e_2, \{u_i, v_i\}_{i=1,2,3}$. It is easy to see that this multiplication table is the same as Tabel \ref{tab 1} (which is the multiplication table of the standard lattice $\LL_1\otimes R\lp t\rp$). Thus, there exists an element $g\in \SL_8(R\llp t\rlp)$ such that $g(\LL_1\otimes R\lp t\rp)=L$, with $g(x\star y)=g(x)\star g(y)$. By Lemma \ref{lm 28}, the multiplication $\star$ determines the quadratic form $q$, so that $g$ also preserves the form $q$. Therefore, $g\in G(R\llp t\rlp)=LG(R)$. That proves Theorem \ref{thm 41}.


\section{Affine Grassmannians for Other Parahoric Subgroups}\label{AffGrass2}
	
So far, we described the affine Grassmannian for $\scrG$, where $\scrG$ is the maximal parahoric subgroups corresponding to a hyperspecial point in the building $\calB(G)$. We can view $\scrG$ as the stabilizer subgroups of the standard lattice $\LL_1$ (type 1). According to Theorem \ref{thm 35}, there are two other maximal parahoric subgroups up to conjugation, corresponding to vertices of type 2 and type 3 in the building $\calB(G)$. It is natural to ask if we can describe the affine Grassmannians for these parahoric subgroups.

To answer this question, first we need to fix parahoric subgroups that we want. The notation here is the same as in \S 5. That is: $k$ is a perfect field with char$(k)\neq 2$, $F=k\llp t\rlp$ is a discrete valuation field with the ring of integers $\calO_F = k\lp t\rp$, and $(C_s, \star)=\Lambda\otimes_\ZZ k\llp t\rlp$ is the split para-octonion algebra over $F$. Consider the lattice $\LL_2, \LL_3 \subset C_s$ in Example \ref{ex 36}:
\[
\LL_2=k\lp t\rp\bb e_1,e_2,\pi u_1, u_2,u_3, v_1,\pi v_2, v_3\pp,
\] 
and 
\[
\LL_3=k\lp t\rp\bb e_1,e_2,\pi u_1, \pi u_2,u_3, v_1, v_2, \pi v_3\pp.
\]
They contain the para-unit element $e$, and are closed under multiplication. Meanwhile, we have 
$\LL_2\subsetneq \LL_2^{\vee}\subsetneq t^{-1}\LL_2$, and $\LL_2^{\vee}\star \LL_2^{\vee}\subset t^{-1}\LL_2$. The corresponding graded lattice chain 
\begin{equation}\label{eq 61}
	\cdots\subsetneq t\LL_2^{\vee}\subsetneq \LL_2\subsetneq \LL_2^{\vee}\subsetneq t^{-1}\LL_2\subsetneq \cdots
\end{equation}
is the vertex of type 2 in $\calB(G)$. The lattice $\LL_3$ satisfies $\LL_3\subsetneq \LL_3^{\vee}\subsetneq t^{-1}\LL_3$, and $M=t\LL_3^{\vee}\star \LL_3^{\vee}+\LL_3$ is self-dual. The corresponding graded lattice chain
\begin{equation}\label{eq 62}
\cdots\subsetneq t\LL_3^{\vee}\subsetneq \LL_3\subsetneq M\subsetneq \LL_3^{\vee}\subsetneq t^{-1}\LL_3\subsetneq\cdots	
\end{equation}
is the vertex of type 3 in $\calB(G)$. We denote by $\SSO(\LL_2)$ (resp. $\SSO(\LL_3)$) the subscheme of points in the isomorphism group scheme $\Isom(\LL_2)$ (resp. $\Isom(\LL_3)$) that preserve the form and whose determinant is equal to 1.

\begin{defi}\label{def 63}
	The parahoric subgroup $\scrG_2$ corresponding to vertices of type 2, is the stabilizer subgroup scheme of $\LL_2$ in $G(k\llp t\rlp)$. More precisely, $\scrG_2$ is an affine group scheme over $\Spec(k\lp t\rp)$ that represents the functor from $k\lp t\rp$-algebras to groups that sends $R$ to
	\[
	\scrG_2(R)=\{g\in \SSO(\LL_2)(R)\mid g(x\star y)=g(x)\star g(y)\},
	\]
	for all $x,y\in \LL_2\otimes_{k\lp t\rp}R$. Similarly, the parahoric subgroup $\scrG_3$ corresponding to vertices of type 3 represents the functor that sends $k\lp t\rp$-algebras $R$ to
	\[
	\scrG_3(R)=\{g\in \SSO(\LL_3)(R)\mid g(x\star y)=g(x)\star g(y)\},
	\]
	for all $x,y\in \LL_3\otimes_{k\lp t\rp}R$. 
\end{defi}

{\begin{rk}\label{rk 64}\rm
	For any $g\in \scrG_2(k\lp t\rp)$, we have $g(\LL_2)=\LL_2$. It is easy to see that $g(\LL_2^{\vee})=\LL_2^{\vee}$ since $\bb g(x),\LL_2\pp=\bb x, g^{-1}(\LL_2)\pp=\bb x,\LL_2\pp\in k\lp t\rp$. So that $g$ preserves the graded lattice chain (\ref{eq 61}). 
	
	For $g\in\scrG_3(k\lp t\rp)$, we get $g(\LL_3)=\LL_3$ and $g(\LL_3^{\vee})=\LL_3^{\vee}$ similarly as above. For $g(M)=tg(\LL_3^{\vee}\star \LL_3^{\vee})+g(\LL_3)$, since $g$ is a change of basis of $\LL_3^{\vee}$, we have $g(\LL_3^{\vee}\star \LL_3^{\vee})=\LL_3^{\vee}\star \LL_3^{\vee}$. Thus $g(M)=M$, and $g$ preserves the graded lattice chain (\ref{eq 62}).
\end{rk}
}

The quotient fpqc-sheaves:
\[
\Gr_{\scrG_2}=LG/L^+\scrG_2,~ (\text{resp.}~\Gr_{\scrG_3}=LG/L^+\scrG_3)
\]
are by definition the affine Grassmannians for $\scrG_2$ (resp. $\scrG_3$). We want to give explicit descriptions of affine Grassmannians $\Gr_{\scrG_2}$ (resp. $\Gr_{\scrG_2}$) in this section.

\begin{thm}\label{thm 65}
	$(a)$. There is an $LG$-equivariant isomorphism 
	\[
	\Gr_{\scrG_2}\simeq \calF_2
	\] 
	where the functor $\calF_2$ sends a $k$-algebra $R$ to the set of $R\lp t\rp$-lattices $L$ of $C_s \otimes_{k\llp t\rlp} R\llp t\rlp$ such that 
	\begin{itemize}
			\item [(1)] $L$ is an order in $C_s \otimes_{k\llp t\rlp} R\llp t\rlp$, i.e., $e\in L$, $L\star L\subset L$.
			\item [(2)] $L\subsetneq L^{\vee}\subsetneq t^{-1}L$.
			\item [(3)] $L^{\vee}\star L^{\vee}\subset t^{-1}L$. 
	    \end{itemize}
$(b)$. There is an $LG$-equivariant isomorphism 
	\[
	\Gr_{\scrG_3}\simeq \calF_3
	\] 
	where the functor $\calF_3$ sends a $k$-algebra $R$ to the set of $R\lp t\rp$-lattices $L$ of $C_s \otimes_{k\llp t\rlp} R\llp t\rlp$ such that 
	\begin{itemize}
			\item [(1)] $L$ is an order in $C_s \otimes_{k\llp t\rlp} R\llp t\rlp$.
			\item [(2)] $L\subsetneq L^{\vee}\subsetneq t^{-1}L$.
			\item [(3)] $M=tL^{\vee}\star L^{\vee}+L$ is self-dual. 
	    \end{itemize}
 \end{thm}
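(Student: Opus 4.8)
The plan is to follow the strategy of the proof of Theorem \ref{thm 41} in \S\ref{ProofofTheorem}. First, by Definition \ref{def 63} the $R\lp t\rp$-points of $\scrG_2$ (resp. $\scrG_3$) form exactly the stabilizer in $LG(R)$ of $\LL_2\otimes_{k\lp t\rp}R\lp t\rp$ (resp. $\LL_3\otimes_{k\lp t\rp}R\lp t\rp$), so part (i) of that argument goes through unchanged. The morphism $LG\rightarrow\calF_2$ (resp. $LG\rightarrow\calF_3$), $g\mapsto g(\LL_2\otimes R\lp t\rp)$ (resp. $g(\LL_3\otimes R\lp t\rp)$), is well defined: any $g\in LG(R)=G(R\llp t\rlp)$ fixes the para-unit and preserves $\star$ and $\bb\ ,\ \pp$, whence $g(\LL_2^\vee)=g(\LL_2)^\vee$ and conditions (1)--(3) are transported from those of $\LL_2$ (resp. $\LL_3$) verified in Example \ref{ex 36}. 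As in part (ii) of \S\ref{ProofofTheorem} it then suffices to prove surjectivity locally for the \etale topology, so I take $(R,\calM)$ strictly henselian local and must exhibit, for each $L$ in the functor, an element $g\in G(R\llp t\rlp)$ with $g(\LL_2\otimes R\lp t\rp)=L$ (resp. $\LL_3$). Equivalently, I must trivialize the ambient algebra $C_s\otimes_{k\llp t\rlp}R\llp t\rlp$ by a basis obeying the relations of Table \ref{tab 1} in such a way that $L$ becomes the scaled standard lattice $\LL_2$ (resp. $\LL_3$).

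The construction begins as in \S\ref{ProofofTheorem}. Using $q(e)=1$ and the splitness of the unimodular constituent of the form on $L$, one produces (locally for the \etale topology) isotropic $e_1,e_2\in L$ with $e=e_1+e_2$ and $\bb e_1,e\pp=1$, hence satisfying (\ref{eq42}); concretely $e_1=\tfrac12(e+e')$, $e_2=\tfrac12(e-e')$ for a vector $e'\in e^\perp$ with $q(e')=-1$ (which lies in $L$ since $\tfrac12\in R\lp t\rp$). The purely algebraic steps of Lemmas \ref{lm 51}--\ref{lm 52} and the identity $x=(e_1\star x)\star e_2+(e_2\star x)\star e_1$ never used self-duality, so they yield $L=R\lp t\rp e_1\oplus R\lp t\rp e_2\oplus L_0$ with $L_0=L_1\oplus L_2$, each $L_i=L_0\star e_i$ totally isotropic of rank $3$ (Remark \ref{rk 29}); moreover $x\star y=-\bb x,y\pp e_1$ and $y\star x=-\bb x,y\pp e_2$ for $x\in L_1,\ y\in L_2$, while $L_1\star L_1\subset L_2$ and $L_2\star L_2\subset L_1$.

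The new feature is that $L_1,L_2$ are no longer in perfect duality, and two structures must be matched on $L_0$. On one hand, over the generic fiber $R\llp t\rlp$ the spans of $L_1,L_2$ are totally isotropic and dually paired, so the computation of \S\ref{ProofofTheorem} (over a field the constant $\lambda$ of (\ref{eq45}) is automatically invertible) produces a basis $e_1,e_2,\{u_i,v_i\}$ obeying Table \ref{tab 1}, i.e.\ a trivialization of the algebra by an element of $G(R\llp t\rlp)$. On the other hand, the classification of symmetric bilinear lattices over the complete local ring $R\lp t\rp$ in \cite[Appendix]{RZ} (see also \cite{Ge}) computes the elementary divisors of the pairing $L_1\times L_2\rightarrow R\lp t\rp$; conditions (2) and (3), together with the order condition (1), force these divisors to be $(t,t,1)$ for $\calF_2$ and $(t,t,t)$ for $\calF_3$, matching $\LL_2$ and $\LL_3$ of Example \ref{ex 36}. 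The proof is completed by choosing the trivialization so that, simultaneously, $L_1=\bigoplus_i R\lp t\rp\,t^{a_i}u_i$ and $L_2=\bigoplus_i R\lp t\rp\,t^{b_i}v_i$ with $(a_i,b_i)$ the exponents of $\LL_2$ (resp. $\LL_3$); the resulting $g\in G(R\llp t\rlp)=LG(R)$ then satisfies $g(\LL_2\otimes R\lp t\rp)=L$ (resp. $\LL_3$) and preserves $q$ by Lemma \ref{lm 28}.

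The main obstacle is precisely this simultaneous matching. The classification \cite[Appendix]{RZ} supplies only the metric data — the divisors $a_i+b_i$ — whereas Table \ref{tab 1} supplies the multiplicative data; the two are coupled through the order relations $t^{a_i}u_i\star t^{a_j}u_j=\pm t^{a_i+a_j}v_k$, which tie the $u$-exponents to the $v$-exponents and rule out every distribution of the prescribed divisors except the $\LL_2$- (resp. $\LL_3$-) pattern. Showing that this pattern is the unique one compatible with (1)--(3) up to the maximal torus and Weyl group of $G$ — and invoking the triality $S_3$-symmetry of $G$ (Proposition \ref{prop 32}) to absorb the residual ambiguity — is the technical heart of the argument; in particular one must rule out the patterns not occurring in Theorem \ref{thm 35}, and this verification has to be carried out separately for the two Jordan types.
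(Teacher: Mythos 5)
Your overall strategy coincides with the paper's: reduce to $(R,\calM)$ strictly henselian, use the para-unit to split off a unimodular hyperbolic plane $R\lp t\rp e_1\oplus R\lp t\rp e_2$, decompose the orthogonal complement as $L_0=L_1\oplus L_2$ with $L_i=L_0\star e_i$ totally isotropic of rank $3$, and then compare the pairing $L_1\times L_2\rightarrow R\lp t\rp$ and the multiplication against the standard lattices $\LL_2,\LL_3$. (The paper orders things slightly differently: it first invokes \cite[Appendix, Lemma A.21]{RZ} together with a discriminant argument to put the form in split antidiagonal shape with $l=lg(L^\vee/L)\in\{2,4,6\}$, and only then extracts $e_1,e_2$ from $e$ with respect to that basis; your construction of $e_1,e_2$ via an $e'\in e^\perp\cap L$ with $q(e')=-1$ would itself need a justification that such an $e'$ exists inside $L$, which is exactly what the standard-form basis supplies.)

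The genuine gap is that the decisive step is asserted rather than proved. You state that conditions (1)--(3) ``force'' the elementary divisors of the pairing to be $(t,t,1)$ for $\calF_2$ and $(t,t,t)$ for $\calF_3$, and then concede in your last paragraph that ruling out the other patterns ``is the technical heart of the argument'' and still ``has to be carried out.'' That verification is the actual content of the paper's proof: an explicit multiplication-table computation shows that the pattern $l=2$ (divisors $(t,1,1)$) is incompatible with $L\star L\subset L$ alone --- one is forced into $u_2\star u_3=\mu v_1$ with $\mu=t^{-1}\notin R\lp t\rp$ --- while for $l=4$ and $l=6$ the order condition pins the multiplication table down to that of $\LL_2\otimes R\lp t\rp$, resp.\ $\LL_3\otimes R\lp t\rp$; the disjointness $\calF_2(R)\cap\calF_3(R)=\emptyset$ then matches each functor with the correct value of $l$. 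Two further points in your sketch do not hold up: the appeal to the triality $S_3$-symmetry of Proposition \ref{prop 32} to ``absorb residual ambiguity'' is neither needed nor meaningful here ($G$ is the fixed-point subgroup of that action, not something acted on by a residual $S_3$ permuting basis patterns), and the remark that ``over a field the constant $\lambda$ of (\ref{eq45}) is automatically invertible'' does not apply, since $R\llp t\rlp$ is not a field for general strictly henselian $R$; the paper obtains $\lambda\in R\lp t\rp^*$ from $L\star L\subset L$ applied to both $u_1\star u_2$ and $v_3\star v_1$.
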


For any $k$-algebra $R$, we have $\scrF_2(R), \scrF_3(R)$ are not empty, as $\LL_2\otimes R\lp t\rp\in\scrF_2(R)$, $\LL_3\otimes R\lp t\rp\in\scrF_3(R)$. Note that $\calF_2(R)\cap\calF_3(R)=\emptyset$. Indeed, if $L\in\calF_2(R)$, we have $L^{\vee}\star L^{\vee}\subset t^{-1}L$, and $M=tL^{\vee}\star L^{\vee}+L=L$ is not self-dual. So that $L\notin\calF_3(R)$.

The proof of Theorem \ref{thm 65} is similar to what we did in \S \ref{AffGrass1}, i.e., we want to check the following two statements:
\begin{itemize}
	\item[(1)] For any $k$-algebra $R$, the subgroup of $LG(R)$ consisting of elements that stabilize the lattice $\LL_2 \otimes_{k\lp t\rp} R\lp t\rp$ (resp. $\LL_3 \otimes_{k\lp t\rp} R\lp t\rp$) agrees with $L^+\scrG_2 (R)$ (resp. $L^+\scrG_3 (R)$).
	\item[(2)] Let $L\in\calF_2(R)$ (resp. $L\in\calF_3(R)$). Then locally for the \etale topology on $R$, there exists $g\in LG(R)$ such that $L = g(\LL_2 \otimes_{k\lp t\rp} R\lp t\rp)$ (resp. $L = g(\LL_3 \otimes_{k\lp t\rp} R\lp t\rp)$).
\end{itemize}

Part (i) comes directly from Definition \ref{def 63}. To prove part (ii), observe that there are natural morphisms
\begin{equation}\label{mor56}
LG\rightarrow \scrF_2,\quad (\text{resp.}~ LG\rightarrow \scrF_3)	
\end{equation}
given by $g\mapsto g(\LL_2\otimes R\lp t\rp)$ (resp. $g\mapsto g(\LL_3\otimes R\lp t\rp)$). Set $L=g(\LL_2\otimes R\lp t\rp)$. We have $g((\LL_2\otimes R\lp t\rp)^\vee)=g(\LL_2\otimes R\lp t\rp)^\vee$ since $g$ preserves the form $\bb \ ,\ \pp$. So $L$ satisfies the condition (2), (3) in Theorem \ref{thm 65} (a) since $\LL_2\otimes R\lp t\rp \in\scrF_2(R)$ satisfies. We also have $e\in L$ and $L\star L\subset L$ since $g$ preserves the multiplication. Thus, $g(\LL_2\otimes R\lp t\rp)\in\scrF_2(R)$. Similarly, we can show that $g(\LL_3\otimes R\lp t\rp)\in\scrF_3(R)$. Thus, morphisms in (\ref{mor56}) are well-defined. By part (i), the stabilizer subgroups of lattices $\LL_2\otimes R\lp t\rp$ (resp. $\LL_3\otimes R\lp t\rp$) are $L^+\scrG_2(R)$ (resp. $L^+\scrG_3(R)$), so we obtain
\begin{equation}\label{mor57}
	LG/L^+\scrG_2\rightarrow \scrF_2,\quad (\text{resp.}~ LG/L^+\scrG_3\rightarrow \scrF_3)	.
\end{equation}

It is enough to show part (ii) for $(R,\calM)$ local strictly henselian. By \cite[Appendix, Lemma A.21]{RZ}, 
there exists a ``standard form" in a set of lattices 
$L$ satisfying $L\subsetneq L^{\vee}\subsetneq t^{-1}L$. Generally, let $V$ be an $F$-vector space with dimension $d=2n$ or $d=2n+1$ (we assume $d\geq 5)$, equipped with  a non-degenerate symmetric form $\bb  ~,~\pp$. Consider a lattice $L\subset V$ satisfying $L\subsetneq L^{\vee}\subsetneq t^{-1}L$ with $l+l'=d$. Here $l$ (resp. $l'$) is the length $l=lg(L^{\vee}/L)$ (resp. $l'=lg(t^{-1}L/L^{\vee}))$. 
Since octonion algebras have dimension 8, we only list results for even dimension case.

For $d=2n$,  
we can find a basis $\{x_i\}$ of 
$L$ as in one of the following cases:
\begin{itemize}
	\item [(1)] Split form: when $l=2r$, we have 
	$L=\oplus_{i=1}^d\calO_{F}\cdot x_i$ with 
	\begin{flalign*}
		\bb x_i,x_{d+1-j}\pp &=\delta_{ij}, ~\text{for}~ i\notin [n-r+1,n+r];\\
		\bb x_i,x_{d+1-j}\pp &=\pi \delta_{ij}, ~\text{for}~ i\in [n-r+1,n+r].
	\end{flalign*}
	\item [(2)] 	Quasi-split form: when $l=2r+1$, we have 
	$L_=\oplus_{i=1}^d \calO_{F}\cdot  x_i$ with 
	\begin{flalign*}
		&\bb x_i,x_{d+1-j}\pp =\delta_{ij}, ~\text{for}~ i\notin [n-r,n+r+1];\\
		&\bb x_i,x_{d+1-j}\pp =\pi\delta_{ij}, ~\text{for}~ i\in [n-r,n+r+1]/\{n,n+1\};\\
		&\bb x_n,x_n\pp=\pi, \quad \bb x_{n+1},x_{n+1}\pp=1,\quad \bb x_n,x_{n+1}\pp=0.
	\end{flalign*}
\end{itemize}

Here $\pi$ is a uniformizer of $\calO_{F}$. 
In our case, $\calO_{F}=k\lp t \rp$ and $L$ is a lattice in  $C_s$ with $d=8$. For $L\in \calF_2(R)$ or $L\in \calF_3(R)$, we have $L\subsetneq L^{\vee}\subsetneq t^{-1}L$. 
We claim that $L\subset C_s$ is of split form. 

Recall that the discriminant $dL$ of $(L,\bb ~, ~\pp)$ is the determinant of the matrix with respect to the symmetric form $\bb~,~\pp$. By change of basis in $L$, two matrices are congruent to each other with respect to different basis (i.e., $B'=T^TBT$, where $T$ is the change of basis matrix), so we have 
\[
dL\in k\lp t\rp^*/k\lp t\rp^{*2}.
\]
The discriminant is a class invariant of quadratic forms. From above, for a lattice $L\subset V$, with $\dim V=2n$. If $L$ is of split form, we have $dL=1$; if $L$ is of quasi-split form, $dL=t$. In our case, $L$ is a lattice of the split para-octonion algebra $C_s$. Since $C_s$ can be viewed as an orthogonal sum of hyperbolic subspaces, we have $dC_s=1$. Thus, if $L$ is of quasi-split form, we have $L\otimes_{k\lp t\rp}k\llp t\rlp\simeq C_s$, and 
\[
d(L\otimes_{k\lp t\rp}k\llp t\rlp)=t\neq 1\in k\llp t\rlp^*/k\llp t\rlp^{*2},
\]
which is a contradiction. Therefore, we only need to consider the split form case. More precisely: we consider the cases when $l=2,4,6$. Here $L=\oplus_{i=1}^8 k\lp t\rp x_i$, and the symmetric forms $\bb~,~\pp$ with respect to basis $\{x_i\}$ are:
	\begin{flalign*}
		&\bb~,~\pp={\rm antidiag}(1,1,1,t,t,1,1,1),\quad\text{if}~ l=2;\\
		&\bb~,~\pp={\rm antidiag}(1,1,t,t,t,t,1,1),\quad\text{if}~ l=4;\\
        &\bb~,~\pp={\rm antidiag}(1,t,t,t,t,t,t,1),\quad\text{if}~ l=6.
	\end{flalign*}
	Our main conclusion is the following:
\quash{
 Quasi-split form: when $l=1,3,5,7$, we have $L=\oplus_{i=1}^8 R\lp t\rp x_i$. Symmetric forms $\bb~,~\pp$ with respect to basis $\{x_i\}$ are:
	\[
	\left(\begin{array}{ccccccc}
		&&&&&&i_1\\
		&&&&&i_2&\\
		&&&&i_3&&\\
		&&&\left(\begin{array}{cc}
		t&0\\
		0&1\\
		\end{array}
		\right)&&&\\
		&&i_3&&&&\\
		&i_2&&&&&\\
		i_1&&&&&&
	\end{array}
	\right),
	\]
	where 
	\begin{flalign*}
		&i_1=i_2=i_3=1, \quad \text{for}~l=1;\\
		&i_1=i_2=1,\quad i_3=t, \quad \text{for}~l=3;\\
		&i_1=1,\quad i_2=i_3=t, \quad \text{for}~l=5;\\
		&i_1=i_2=i_3=t, \quad \text{for}~l=7.
	\end{flalign*}
}
 
\begin{itemize}
	\item when $l=2$, we claim that $L$ is not closed under multiplication ($L\star L\nsubseteq L$). Hence $L\notin \calF_2(R)$ and $L\notin \calF_3(R)$.
	
	\item when $l=4$, there exists a basis of $L$, such that its multiplication table is the same as multiplication table in $\LL_2 \otimes R\lp t\rp$. Hence there exists $g\in LG(R)$, such that $L=g(\LL_2 \otimes R\lp t\rp)$. Then we get:
	    \begin{flalign*}
		&L\star L\subset L;\\
		&L^{\vee}\star L^{\vee}\subset t^{-1}L,
	\end{flalign*}
	since $L^\vee=g((\LL_2\otimes R\lp t\rp)^\vee)$. So that $L\in\calF_2(R)$. We also get $L\notin\calF_3(R)$ since $\calF_2(R)\cap\calF_3(R)=\emptyset$.
	\item when $l=6$, there exists a basis of $L$ such that its multiplication table is the same as  table in $\LL_3 \otimes R\lp t\rp$. Similarly, we have $L=g(\LL_3 \otimes R\lp t\rp)$ for some $g\in LG(R)$ and 
	\begin{flalign*}
		&L\star L\subset L;\\
		&M=tL^\vee\star L^\vee+L ~\text{is self-dual},
	\end{flalign*}
since $M=g(t(\LL_3^\vee \otimes R\lp t\rp)\star(\LL_3^\vee \otimes R\lp t\rp)+(\LL_3 \otimes R\lp t\rp))$. So that $L\in\calF_3(R)$
\end{itemize}

For any $L\in\calF_2(R)$ (resp. $L\in\calF_3(R)$), consider $l=lg(L^\vee/L)$. Here $l$ can only be equal to $2, 4, 6$ since $L$ is of split form. From above claims, we get $l=4$ (resp. $l=6$). Thus, there exists $g\in LG(R)$ such that $L=g(\LL_2 \otimes R\lp t\rp)$ (resp. $L=g(\LL_3 \otimes R\lp t\rp)$). This finish
    the proof of Theorem \ref{thm 65}. 

The proof of claims is similar to \S \ref{AffGrass1}. Let $L$ be a lattice with $e\in L$, $L\subsetneq L^{\vee}\subsetneq t^{-1}L$. No matter $l=lg(L^\vee/L)=2,4$ or $6$, there exists a basis $\{x_i\}$ of $L$, such that the symmetric form is an anti-diagonal matrix with respect to $\{x_i\}$. Consider $e=\sum_{i=1}^8 a_ix_i\in L$. We set:
\[
e_1=\sum_{i=1}^4 a_ix_i,\quad e_2=\sum_{i=5}^8 a_ix_i.
\]
Then $q(e_1)=q(e_2)=0$. We get $\bb e_1,e_2\pp=1$ by $q(e)=1$, and we also have $e_1\star e_1=e_2$, $ e_2\star e_2=e_1$, $e_1\star e_2=e_2\star e_1=0$ by similar calculation that we did in \S \ref{AffGrass1}. Thus, $R\lp t\rp e_1\oplus R\lp t\rp$ is a hyperbolic subspace. By Nakayama's lemma, $\{e_1, e_2\}$ are primitive elements in $L$. Choose elements $y_3,\cdots,y_8\in L$ such that $\{e_1, e_2,  y_3,..., y_8\}$ is a basis of $L$, By replacing $y_i$ to 
$
y_i'=y_i-\bb y_i,e_2\pp e_1-\bb y_i,e_1\pp e_2,
$
we can always assume that $y_i\in L_0$, where
\[
L_0=\{x\in L\mid \bb x,e_1\pp=\bb x,e_2\pp=0\}.
\]
Therefore, $L=R\lp t\rp e_1\oplus R\lp t\rp e_2 \oplus L_0$. Set $L_i=L_0\star e_i$ for $i=1,2$. 

\begin{lm}\label{lm 66}
	We have 
	\begin{itemize}
			\item [(1)] $L_i\subset L_0$ for $i=1,2$. 
			\item [(2)] $e_i\star L_i=0,~ L_i\star e_{i+1}=0$ for $i=1,2\mod 2$.
		\end{itemize}
\end{lm}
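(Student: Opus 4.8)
The plan is to carry over essentially verbatim the argument already used for Lemma \ref{lm 51}. The elements $e_1,e_2$ constructed just before this lemma satisfy exactly the same relations ($e_1\star e_1=e_2$, $e_2\star e_2=e_1$, $e_1\star e_2=e_2\star e_1=0$, $q(e_1)=q(e_2)=0$, $\bb e_1,e_2\pp=1$), and the subspaces $L_0$ and $L_i=L_0\star e_i$ are defined identically. Crucially, nothing in the proof of Lemma \ref{lm 51} uses self-duality of $L$ or the value of $l=lg(L^\vee/L)$; it only invokes the hyperbolic relations among $e_1,e_2$ together with the symmetric-composition identities of Lemmas \ref{lm 27} and \ref{lm 28}. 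Hence the same computation applies here without modification.

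For part (1), I would take an arbitrary element of $L_1$, write it as $x_1\star e_1$ with $x_1\in L_0$, and check orthogonality to both $e_1$ and $e_2$ using the defining symmetry $\bb a\star b,c\pp=\bb b\star c,a\pp$ of Definition \ref{def 26}:
\[
\bb x_1\star e_1,e_1\pp=\bb e_1\star e_1,x_1\pp=\bb e_2,x_1\pp=0,\qquad \bb x_1\star e_1,e_2\pp=\bb e_1\star e_2,x_1\pp=0,
\]
where the right-hand sides vanish because $x_1\in L_0$ is orthogonal to $e_1,e_2$ and because $e_1\star e_2=0$. This gives $L_1\subset L_0$, and the argument for $L_2$ is symmetric.

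For part (2), I would use the two consequences of Lemma \ref{lm 28}. First, the identity $(x\star y)\star x=q(x)y$ applied with $x=e_1$ gives $e_1\star(x_1\star e_1)=q(e_1)x_1=0$, so $e_1\star L_1=0$. Second, the polarized identity $(x\star y)\star z+(z\star y)\star x=\bb x,z\pp y$ applied to $(x_1\star e_1)\star e_2$ yields
\[
(x_1\star e_1)\star e_2+(e_2\star e_1)\star x_1=\bb x_1,e_2\pp e_1,
\]
and since $e_2\star e_1=0$ while $x_1\in L_0$ forces $\bb x_1,e_2\pp=0$, we conclude $L_1\star e_2=0$. The statements for $L_2$ follow by interchanging the roles of $e_1$ and $e_2$.

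Because this is a direct transcription of an earlier argument, there is essentially no obstacle to overcome. The only point requiring care is the bookkeeping of indices mod $2$ and confirming that the relations among $e_1,e_2$ established immediately before the lemma are genuinely identical to those in \S\ref{AffGrass1}, so that no property specific to the self-dual (type 1) case is being silently used.
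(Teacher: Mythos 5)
Your proposal is correct and is essentially identical to the paper's argument: the paper itself simply states that the proof of Lemma \ref{lm 66} is omitted because it is the same as that of Lemma \ref{lm 51}, which is exactly the computation you transcribe. Your explicit observation that the earlier argument never uses self-duality of $L$ or the value of $l=lg(L^\vee/L)$ is the right justification for why the transcription is legitimate.
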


We omit the proof of Lemma \ref{lm 66}, since it is the same as the proof of Lemma 5.1. For any $x\in L_0$, we get
\[
x=\bb e_1,e_2\pp x=(e_2\star x)\star e_1+(e_1\star x)\star e_2\in L_1+L_2,
\]
by Lemma \ref{lm 28}. Hence $L_0=L_1+L_2$. We obtain $L_0=L_1\oplus L_2$	since $L_1, L_2$ are totally isotropic with rank $\leq 3$. When we restrict the symmetric form to $L_0$, there exists a basis $\{u_i\}_{i=1}^3$ of $L_1$ (resp. $\{v_i\}_{i=1}^3$ of $L_2$) such that 
\[
\bb u_i,v_j\pp=0,
\]
for $i\neq j$, since the symmetric form is an anti-diagonal matrix.

\begin{lm}\label{lm 67}
We have

$(1)$ \begin{flalign*}
		&u_i\star e_1=e_2\star u_i=-u_i;\\
		&v_i\star e_2=e_1\star u_i=-v_i,
	\end{flalign*}
	
$(2)$ \begin{flalign*}
		&u_i\star v_j=-\bb u_i,v_j\pp  e_1;\\
		&v_j\star u_i=-\bb u_i,v_j\pp  e_2,
	\end{flalign*}
	
$(3)$ 
	\[
	u_i\star u_j=-u_j\star u_i\in L_2,\quad v_i\star v_j=-v_j\star u_i\in L_1.
	\]
for $i,j=1,2,3$.	
\end{lm}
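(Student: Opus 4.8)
The plan is to observe that all three parts of Lemma~\ref{lm 67} are formal consequences of the para-unit relation, Lemma~\ref{lm 66}, and the symmetric-composition identities recorded in Definition~\ref{def 26} and Lemmas~\ref{lm 27}, \ref{lm 28}. In particular none of the arguments uses self-duality of $L$, so the computations from \S\ref{ProofofTheorem} (namely Lemma~\ref{lm 52} together with the in-text analysis of $u_i\star v_j$ and $u_i\star u_j$) transfer essentially verbatim to the present non-self-dual setting. Throughout I would use repeatedly that every $x\in L_0$ satisfies $\bb x,e\pp=0$, so that $e\star x=x\star e=-x$ because $e$ is a para-unit.

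For part $(1)$ I would argue exactly as in Lemma~\ref{lm 52}. Since $u_i\in L_1$, Lemma~\ref{lm 66}$(2)$ gives $u_i\star e_2=0$ and $e_1\star u_i=0$; combining this with $u_i\star e=e\star u_i=-u_i$ and $e=e_1+e_2$ yields $u_i\star e_1=e_2\star u_i=-u_i$. The identities for $v_i\in L_2$ follow symmetrically, using $v_i\star e_1=e_2\star v_i=0$ from Lemma~\ref{lm 66}$(2)$. For part $(2)$ I would apply the linearized identity $(x\star y)\star z+(z\star y)\star x=\bb x,z\pp y$ of Lemma~\ref{lm 28}$(1)$ with $x=u_i$, $y=e_1$, $z=v_j$. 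Since $v_j\star e_1=0$ and $u_i\star e_1=-u_i$ by part $(1)$, this collapses to $-\,u_i\star v_j=\bb u_i,v_j\pp e_1$, i.e. $u_i\star v_j=-\bb u_i,v_j\pp e_1$; the formula for $v_j\star u_i$ is obtained identically with the roles of $L_1,L_2$ and of $e_1,e_2$ interchanged.

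For part $(3)$ there are three points to establish. First, that $u_i\star u_j$ is orthogonal to both $e_1$ and $e_2$, hence lies in $L_0$: this is immediate from Definition~\ref{def 26}, since $\bb u_i\star u_j,e_1\pp=\bb u_j\star e_1,u_i\pp=-\bb u_j,u_i\pp=0$ by total isotropy of $L_1$, while $\bb u_i\star u_j,e_2\pp=\bb u_j\star e_2,u_i\pp=0$ by Lemma~\ref{lm 66}$(2)$. Second, that in fact $u_i\star u_j\in L_2$: applying Lemma~\ref{lm 28}$(1)$ with $x=u_j$, $y=u_i$, $z=e_2$ gives $(u_j\star u_i)\star e_2+(e_2\star u_i)\star u_j=\bb u_j,e_2\pp u_i=0$, and since $e_2\star u_i=-u_i$ this rearranges to $u_i\star u_j=(u_j\star u_i)\star e_2\in L_0\star e_2=L_2$. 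Third, the antisymmetry: because $u_j\star u_i$ itself lies in $L_2$ by the same argument, and every $w\in L_2$ satisfies $w\star e_2=-w$ by part $(1)$, the relation $u_i\star u_j=(u_j\star u_i)\star e_2$ becomes $u_i\star u_j=-\,u_j\star u_i$. The assertions for $v_i\star v_j$ follow by the symmetric argument.

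I do not expect a genuine obstacle here; the only mildly subtle point is the antisymmetry in part $(3)$, which I would not read off a multiplication table but deduce cleanly from $u_i\star u_j=(u_j\star u_i)\star e_2$ together with $w\star e_2=-w$ on $L_2$. The one thing I would verify carefully is that the entire argument avoids self-duality of $L$ --- which entered \S\ref{ProofofTheorem} only to produce the hyperbolic pair $e_1,e_2$ and the splitting $L_0=L_1\oplus L_2$, both of which are already available in the present setup --- so that the passage from the type-$1$ computation is legitimate in the type-$2$ and type-$3$ cases.
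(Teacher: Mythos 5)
Your proposal is correct and follows essentially the same route as the paper's own proof: part (1) by transferring the Lemma \ref{lm 52} computation via Lemma \ref{lm 66}, part (2) by the identity $(x\star y)\star z+(z\star y)\star x=\bb x,z\pp y$ with $y=e_1$, and part (3) by the same three-step argument ($u_i\star u_j\in L_0$, then $u_i\star u_j=(u_j\star u_i)\star e_2\in L_2$, then antisymmetry from $w\star e_2=-w$ on $L_2$). Your explicit remark that none of this uses self-duality of $L$ is a helpful clarification of why the type-1 computations carry over, but the substance of the argument is the same.
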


\begin{proof}
The proof of Lemma \ref{lm 67} (1) is similar to Lemma \ref{lm 52}. For (2), we have
\begin{flalign*}
	u_i\star v_j&=-(u_i\star e_1)\star v_j
	            =(v_j\star e_1)\star u_i-\bb u_i,v_j\pp e_1\\
	            &=-\bb u_i,v_j\pp e_1.
\end{flalign*} 
Similarly, we also have $v_j\star u_i=-\bb u_i,v_j\pp  e_2$. For (3), it is easy to see that $u_i\star u_j\in L_0$ since $\bb u_i\star u_j.e_1\pp=\bb u_i\star u_j,e_2\pp=0$. Consider
\[
(e_2\star u_i)\star u_j+(u_j\star u_i)\star e_2=\bb e_2,u_j\pp u_i=0,
\]
and $(e_2\star u_i)\star u_j=-u_i\star u_j$ by (1). We get $u_i\star u_j=(u_j\star u_i)\star e_2\in L_0\star e_2=L_2$. Thus, $(u_j\star u_i)\star e_2=-u_j\star u_i$ since $v_i\star e_2=-v_i$, which gives us $u_i\star u_j=-u_j\star u_i$. Similarly, we have $v_i\star v_j=-v_j\star v_i$.
\end{proof}
	
So far we only use the fact that the symmetric form is an antidiagonal matrix. To prove our claims, we now need to consider the explicit number of $l=lg(L^\vee/L)$.

{\bf (a) $l=2$ case}. When $l=2$, the restriction of $\bb ~,~\pp$ to $L_0$ is :
\[
\bb ~,~\pp \mid_{L_0}={\rm antidiag}(1,1,t,t,1,1).
\]
Since $L_0=L_1\oplus L_2$ and $L_1, L_2$ are totally isotropic, there exists a basis $\{u_i\}_{i=1}^3$ of $L_1$ (resp. $\{v_i\}_{i=1}^3$ of $L_2$) such that 
\begin{flalign*}
	&\bb u_1,v_1\pp=t,\quad \bb u_2,v_2\pp=1;\\
	&\bb u_3,v_3\pp=1,\quad \bb u_i,v_j\pp=0,
\end{flalign*}
for $i\neq j$. By Lemma \ref{lm 66}, Lemma \ref{lm 67}, we almost complete the multiplication table of $L$. The only left part is $u_i\star u_j$ (resp. $v_i\star v_j$).

Consider $u_1\star u_2\in L_2$. We obtain 
\[
\bb u_1\star u_2, u_1\pp=-\bb u_1\star u_2, u_1\star e_1\pp=q(u_1)\bb u_2,e_1\pp=0,
\]	
by Lemma 2.7	. Similarly, we get $\bb u_1\star u_2, u_2\pp=0$. So we can assume $u_1\star u_2=\lambda v_3$. Suppose that $L\star L\subset L$. We have $\lambda\in R\lp t\rp$. Since
\[
(u_1\star u_2)\star v_1+(v_1\star u_2)\star u_1=\bb u_1,v_1\pp u_2=tu_2,
\]
and $v_1\star u_2=0$ by Lemma \ref{lm 67} (2), we get $(u_1\star u_2)\star v_1=tu_2$. Thus $\lambda(v_3\star v_1)=tu_2$. Similarly, consider $(u_1\star u_2)\star v_2=\lambda(v_3\star v_2)$ and $(u_1\star u_2)\star v_3=\lambda(v_3\star v_3)$, we get
\begin{equation}\label{eq510}
\lambda(v_3\star v_2)=-u_1,\quad v_3\star v_3=0.
\end{equation}
Thus, $\lambda^{-1}\in R\lp t\rp$, which implies $\lambda\in R\lp t\rp^*$. We can assume $\lambda=1$ and get
\begin{equation}\label{eq511}
v_3\star v_1=tu_2,\quad v_3\star v_2=-u_1,\quad v_3\star v_3=0.	
\end{equation}
Next we consider $u_2\star u_3\in L_2$. We can assume $u_2\star u_3=\mu v_1$ for some $\mu\in R\lp t\rp$, since $\bb u_2\star u_3, v_2\pp=\bb u_2\star u_3, v_3\pp=0$. From Lemma \ref{lm 67} (1), (2), we deduce that
\[
(u_2\star u_3)\star v_3+(v_3\star u_3)\star u_2=\bb u_2,v_3\pp u_3,
\]
and $(v_3\star u_3)\star u_2=u_2$, $\bb u_2,v_3\pp=0$. Thus, we have $\mu(v_1\star v_3)=-u_2$. But $v_1\star v_3=-v_3\star v_1=-tu_2$ from equation (\ref{eq510}). Therefore
\[
-t\mu u_2=-u_2.
\]
Hence $\mu=t^{-1}\notin R\lp t\rp$, which is a contradiction. So that $L\star L\nsubseteq L$. \\

{\bf (b) $l=4$ case}. When $l=4$, the restriction of $\bb ~,~\pp$ to $L_0$ is :
\[
\bb ~,~\pp \mid_{L_0}={\rm antidiag}(1,t,t,t,t,1).
\]
There exists a basis $\{u_i\}_{i=1}^3$ of $L_1$ (resp. $\{v_i\}_{i=1}^3$ of $L_2$) such that 
\begin{flalign*}
	&\bb u_1,v_1\pp=t,\quad \bb u_2,v_2\pp=t;\\
	&\bb u_3,v_3\pp=1,\quad \bb u_i,v_j\pp=0,
\end{flalign*}
for $i\neq j$. Assume $L\star L\subset L$. By using the same discussion as we did in $l=2$ case, we get the following multiplication table:\\

	\begin{minipage}{\textwidth}
		\begin{minipage}[t]{0.45\textwidth}
			\centering
			\makeatletter\def\@captype{table}\makeatother\caption{$u_i*u_j$}
			\vspace{-3mm}
			\begin{tabular}{|c|c|c|c|}\hline
				$*$& $u_1$&$u_2$&$u_3$\\
				\hline
				$u_1$& $0$& $tv_3$ & $-v_2$\\
				\hline
				$u_2$& $-tv_3$& 0&$v_1$\\
				\hline
				$u_3$& $v_2$& $-v_1$&0\\
				\hline
			\end{tabular}
		\end{minipage}
		\begin{minipage}[t]{0.45\textwidth}
			\centering
			\makeatletter\def\@captype{table}\makeatother\caption{$v_i*v_j$}
			\vspace{-3mm}
			\begin{tabular}{|c|c|c|c|}\hline
				$*$& $v_1$&$v_2$&$v_3$\\
				\hline
				$v_1$& 0& $tu_3$ & $-u_2$\\
				\hline
				$v_2$& $-tu_3$& 0&$u_1$\\
				\hline
				$v_3$& $u_2$& $-u_1$&0\\
				\hline
			\end{tabular}
		\end{minipage}
	\end{minipage}\\

It is easy to see that $L$ has the same multiplication table as $\LL_2\otimes R\lp t\rp$. Thus, there exists $g\in LG(R)$, such that $L=g(\LL_2 \otimes R\lp t\rp)$. 
So that $L\in\calF_2(R)$.\\

{\bf (c) $l=6$ case}. When $l=6$, the restriction of $\bb ~,~\pp$ to $L_0$ is :
\[
\bb ~,~\pp \mid_{L_0}={\rm antidiag}(t,t,t,t,t,t).
\]
There exists a basis $\{u_i\}_{i=1}^3$ of $L_1$ (resp. $\{v_i\}_{i=1}^3$ of $L_2$) such that 
\begin{flalign*}
	&\bb u_1,v_1\pp=t,\quad \bb u_2,v_2\pp=t;\\
	&\bb u_3,v_3\pp=t,\quad \bb u_i,v_j\pp=0,
\end{flalign*}
for $i\neq j$. Assume $L\star L\subset L$. Again, by using the same discussion as we did in $l=2$ and $l=4$ case, we get the following multiplication table:\\

	\begin{minipage}{\textwidth}
		\begin{minipage}[t]{0.45\textwidth}
			\centering
			\makeatletter\def\@captype{table}\makeatother\caption{$u_i*u_j$}
			\vspace{-3mm}
			\begin{tabular}{|c|c|c|c|}\hline
				$*$& $u_1$&$u_2$&$u_3$\\
				\hline
				$u_1$& $0$& $tv_3$ & $-tv_2$\\
				\hline
				$u_2$& $-tv_3$& 0&$tv_1$\\
				\hline
				$u_3$& $tv_2$& $-tv_1$&0\\
				\hline
			\end{tabular}
		\end{minipage}
		\begin{minipage}[t]{0.45\textwidth}
			\centering
			\makeatletter\def\@captype{table}\makeatother\caption{$v_i*v_j$}
			\vspace{-3mm}
			\begin{tabular}{|c|c|c|c|}\hline
				$*$& $v_1$&$v_2$&$v_3$\\
				\hline
				$v_1$& 0& $u_3$ & $-u_2$\\
				\hline
				$v_2$& $-u_3$& 0&$u_1$\\
				\hline
				$v_3$& $u_2$& $-u_1$&0\\
				\hline
			\end{tabular}
		\end{minipage}
	\end{minipage}\\

Thus $L$ has the same multiplication table as  $\LL_3 \otimes R\lp t\rp$. Similarly, there exists $L=g(\LL_3 \otimes R\lp t\rp)$ for some $g\in LG(R)$, and we have
\[
M=tL^\vee\star L^\vee+L ~\text{is self-dual}.
\]
So that $L\in\calF_3(R)$.

Therefore, for $L\in\calF_2(R)$, we show that $l$ is equal to $4$ (otherwise, if $l=2$, $L\star L\nsubseteq L$; if $l=6$, $L\in\calF_3(R)$ implies $L\notin \calF_2(R)$). There exists $g\in LG(R)$, such that $L=g(\LL_2 \otimes R\lp t\rp)$. For $L\in\calF_3(R)$, we show that $l=6$, and there exists $g\in LG(R)$, such that $L=g(\LL_3 \otimes_{k\lp t\rp} R\lp t\rp)$. This finish the proof of Theorem \ref{thm 65}.
	
\bigskip	
	
	
	\bibliographystyle{plain}
	\let\itshape\upshape
	\bibliography{ref}


	\Addresses

\end{document}